\DeclarePairedDelimiter\abs{\lvert}{\rvert}
\newtheorem{thm}{Theorem}[section]
\newtheorem{cor}[thm]{Corollary}
\newtheorem{prop}[thm]{Proposition}
\newtheorem{lem}[thm]{Lemma}
\theoremstyle{definition}
\newtheorem{defn}[thm]{Definition}
\newtheorem{exmp}[thm]{Example}
\theoremstyle{remark}
\newtheorem{rem}[thm]{Remark}
\let\c@equation\c@thm
\numberwithin{equation}{section}
\newcommand{\bbN}{\mathbb{N}}
\newcommand{\bbR}{\mathbb{R}}
\newcommand{\bbZ}{\mathbb{Z}}
\title{Exact Lagrangians in the cotangent bundle of a sphere and a torus}
\author{Raunak Kundagrami \\\today}
\begin{document}

\begin{abstract}
It is known that any closed, exact Lagrangian in the cotangent bundle of a closed, smooth manifold is of the same homotopy type as the zero section. In this paper, we give a Fukaya-theoretic proof of this fact for the sphere and torus to review and demonstrate some of the homological algebra techniques in symplectic geometry.
\end{abstract}

\maketitle

\tableofcontents

\section{Introduction}
There is a well known conjecture in symplectic geometry, called the ``nearby Lagrangian conjecture,'' which states that every closed, exact Lagrangian submanifold of the cotangent bundle of a closed, smooth manifold is Hamiltonian isotopic to the zero section. Though progress has been made toward proving this conjecture, it is still wide open. There are, however, some important partial results (see e.g. \cite{FukayaSeidelSmith}), and we will state one of them here.

\begin{thm}\cite{AbouzaidKragh} Every closed, exact Lagrangian submanifold of the cotangent bundle of a closed, smooth manifold is (simply) homotopy equivalent to the zero section.
\end{thm}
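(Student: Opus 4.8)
The plan is to reduce the entire statement to a single map: the restriction of the canonical projection $\pi \colon T^*M \to M$ to the Lagrangian, giving $\pi|_L \colon L \to M$. If one can show this map is a simple homotopy equivalence, the theorem follows, since any closed exact Lagrangian is to be compared with the zero section via $\pi$. The organizing framework throughout is the wrapped Fukaya category $\mathcal{W}(T^*M)$. The key structural input, which I would take from the work of Abbondandolo--Schwarz and Abouzaid, is that a cotangent fiber $F_q = T^*_q M$ generates this category and that its endomorphism algebra is quasi-isomorphic to the chains on the based loop space,
\[
  CW^*(F_q, F_q) \simeq C_{-*}(\Omega_q M).
\]
Every closed exact Lagrangian $L$ is an object of $\mathcal{W}(T^*M)$, and hence determines a module over $C_{-*}(\Omega_q M)$; all of the topological information about $\pi|_L$ is to be extracted from this module.

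First I would establish that $\pi|_L$ is an honest homotopy equivalence. The steps are: (i) show that $L$ meets every cotangent fiber, using that $L$ is nonzero in $\mathcal{W}(T^*M)$ together with the generation criterion; (ii) use exactness to force the geometric intersection number of $L$ with a generic fiber to be $\pm 1$, so that the Floer module of $L$ over $C_{-*}(\Omega_q M)$ has rank one; and (iii) carry out this analysis equivariantly over the based loop space, so that it detects the fundamental group, yielding that $\pi|_L$ induces isomorphisms on $\pi_1$ and on homology with all local coefficients. By Whitehead's theorem these two facts together give a homotopy equivalence.

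A genuine technical obstruction appears before step (iii) can even be phrased correctly: one needs the Maslov class $\mu_L \in H^1(L;\bbZ)$ to vanish, both to obtain a $\bbZ$-grading on Floer cohomology and to make the loop-space comparison functorial. Here I would invoke Kragh's stable-homotopy-theoretic refinement, in which the stable Gauss map of $L$ is shown to factor, up to the relevant twist, through that of the zero section; this is the step that genuinely requires passing from chain complexes to spectra (Waldhausen $A$-theory), since the obstruction is invisible at the level of homology.

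The hardest part, and the point at which this result goes strictly beyond the earlier homotopy-equivalence theorems, is upgrading from a homotopy equivalence to a \emph{simple} one: one must show that the Whitehead torsion $\tau(\pi|_L) \in \mathrm{Wh}(\pi_1 M)$ vanishes. Because Whitehead torsion is not a homotopy invariant, no chain-level Floer computation can detect it; the plan is instead to realize the Floer continuation data as a parametrized family and to compute $\tau(\pi|_L)$ as the image of the associated $A$-theoretic Euler characteristic, then show that this image is zero. Concretely this requires producing a geometric simple-homotopy structure on $\pi|_L$ from a choice of Morse or generating-family data compatible with $L$, and proving that the resulting torsion vanishes — the main obstacle of the whole argument, and the core new content contributed by Abouzaid--Kragh in \cite{AbouzaidKragh}.
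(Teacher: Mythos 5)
First, a point of comparison that matters for this review: the paper does not prove this statement at all --- it is quoted as background from \cite{AbouzaidKragh}, and the paper's own arguments (Theorems \ref{sphere} and \ref{torus}) establish only the far weaker claim that exact Lagrangian \emph{surfaces} in $T^*S^2$ and $T^*T^2$ have the topological type of the zero section, obtained by computing $\text{Hom}^*(V_S,V_S)$ for modules over the endomorphism algebra of a cotangent fiber and then invoking the classification of surfaces. So there is no in-paper proof to match your proposal against; what you have written is a roadmap of the actual Abouzaid--Kragh argument from the literature. As such, its overall architecture --- generation by a cotangent fiber, the equivalence $CW^*(F_q,F_q)\simeq C_{-*}(\Omega_q M)$, the loop-space module detecting $\pi_1$ and homology with local coefficients, a stable-homotopy-theoretic argument for vanishing of the Maslov class, and parametrized $A$-theory for the Whitehead torsion --- is the historically correct one, and it is necessarily different from the paper's method: the paper's purely (co)homological computations in $\text{Mod}^b C$ can never see $\pi_1$, let alone torsion in $\mathrm{Wh}(\pi_1 M)$, which is exactly why the paper cites this theorem rather than proving it.

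Second, judged as a proof, your proposal contains one concrete error and one large deferral. The error is step (ii): exactness does \emph{not} force the geometric intersection number of $L$ with a generic fiber to be $\pm 1$; an exact Lagrangian may meet a fiber in arbitrarily many points. What is true --- and is itself a nontrivial theorem (due to Fukaya--Seidel--Smith, or in Abouzaid's framework a consequence of analyzing the $C_{-*}(\Omega_q M)$-module structure together with duality) --- is that the \emph{Floer} cohomology $HF^*(F_q,L)$ has total rank one; as written, your step conflates the geometric and Floer-theoretic counts and would fail. The deferral: the vanishing of the Maslov class and, above all, the vanishing of the Whitehead torsion $\tau(\pi|_L)\in\mathrm{Wh}(\pi_1 M)$ are named as steps but not argued --- you yourself flag the torsion computation as ``the main obstacle'' and assign it to \cite{AbouzaidKragh}. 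That is an accurate diagnosis of where the real difficulty lies, but it means the proposal is an annotated outline of the cited proof rather than a proof, which is a reasonable outcome given that the theorem is well beyond the techniques developed in this paper.
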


In this paper, we use Fukaya categories and homological algebraic techniques to prove the following result for the cotangent bundle $T^*M$, where $M$ is a sphere $S^2$ or a torus $T^2$.

\begin{thm}(Theorem \ref{sphere} and Theorem \ref{torus}) Let $M$ be $S^2$ or $T^2$. Let $S$ be an orientable, compact, connected, exact Lagrangian in $T^*M$. Then $S$ is topologically $M$.
\end{thm}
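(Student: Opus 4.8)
The plan is to translate the purely topological question into the homological algebra of the compact (exact) Fukaya category $\mathcal{F}(T^*M)$, using the zero section $M$ as a generator. Since $S$ is a closed orientable surface, the classification of surfaces reduces the claim to showing that $S$ and $M$ have the same Betti numbers, i.e. $H^*(S;\Lambda)\cong H^*(M;\Lambda)$ as graded $\Lambda$-vector spaces, where $\Lambda$ is our coefficient field. Exactness of $S$ rules out disk bubbling, so the Floer complex of $S$ with itself reduces to a Morse model and $HF^*(S,S)\cong H^*(S)$; likewise $HF^*(M,M)\cong H^*(M)$. Thus it suffices to prove that $S$ and $M$ are quasi-isomorphic objects of $\mathcal{F}(T^*M)$, for then $HF^*(S,S)\cong HF^*(M,M)$ and we are done.

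First I would record that $M$ split-generates $\mathcal{F}(T^*M)$ for $M=S^2,T^2$ (via the structure of $T^*M$ and surjectivity of the open-closed map onto the unit of symplectic cohomology; cf. the methods behind Theorem~1.1). Let $A=\mathrm{End}_{\mathcal{F}}(M)$, an $A_\infty$-algebra with $H^*(A)\cong H^*(M)$: for $S^2$ this is $\Lambda[x]/(x^2)$ with $|x|=2$, and for $T^2$ the exterior algebra $\Lambda[a,b]$ with $|a|=|b|=1$. Generation makes the Yoneda-type functor $HF^*(M,-)$ cohomologically full and faithful on the subcategory it generates, so $S$ is determined up to quasi-isomorphism by the $A$-module $\mathcal{M}_S:=HF^*(M,S)$.

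Next I would pin down $\mathcal{M}_S$. It is nonzero: if $HF^*(M,S)=0$ then, $M$ being a generator, $HF^*(T,S)=0$ for every object $T$ of the generated category, in particular for $T=S$, contradicting $HF^*(S,S)\cong H^*(S)\neq 0$. Its total rank is constrained by the graded Euler characteristic $\chi\big(HF^*(M,S)\big)$, which equals the algebraic intersection number $[S]\cdot[M]$ in $T^*M$, a homological invariant; since $S$ is exact and hence homologous to the zero section, this equals $[M]\cdot[M]=\chi(M)$, the Euler number of $T^*M$. Because exact Lagrangians in $T^*M$ have vanishing Maslov class, all objects are honestly $\mathbb{Z}$-graded, and together with Poincaré duality $HF^*(M,S)\cong HF^{2-*}(S,M)^\vee$ and unitality of the $A$-action this rigidifies $\mathcal{M}_S$: over $\Lambda[x]/(x^2)$ (resp. $\Lambda[a,b]$) the only module with the correct Euler characteristic, grading, and duality is the free rank-one module $A$ itself. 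Full faithfulness of the Yoneda functor then gives $S\simeq M$ in $\mathcal{F}(T^*M)$, whence $H^*(S)\cong H^*(M)$ and $S$ is topologically $M$.

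The main obstacle is the middle step: rigorously establishing generation and then proving the module-theoretic rigidity that singles out $A$ among all $A$-modules with the prescribed Euler characteristic. Generation is the deepest input and rests on the global structure of $\mathcal{F}(T^*M)$ rather than on anything local to $S$. For the rigidity I expect the delicate points to be ruling out higher-rank modules and nontrivial cones or shifts that share the same Euler characteristic — this is where control of the $\mathbb{Z}$-grading (via the vanishing Maslov class) and of the $A_\infty$-module structure is essential, and where the torus may additionally require tracking local systems on the zero section, so that the relevant generator is the full family $M\times(\text{flat line bundles})$ rather than $M$ alone.
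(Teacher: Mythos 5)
Your strategy --- split-generate the compact Fukaya category by the zero section, then classify the Yoneda module $\mathcal{M}_S=HF^*(M,S)$ over $A=\mathrm{End}_{\mathcal{F}}(M)$ --- contains two gaps, each of which is essentially a theorem of the same depth as the statement being proved. First, generation. For $S^2$, zero-section generation is a Fukaya--Seidel--Smith-type theorem (deep, but citable); for $T^2$ it is not available for the plain zero section: once $\pi_1(M)\neq 0$, the correct generator is the zero section equipped with all rank-one local systems. You flag this in your last sentence, but it is not a footnote-sized repair: if the generator is a family of objects, then $HF^*(M,S)$ as a module over $A$ alone no longer determines $S$, and your entire rigidity step must be reformulated over a larger algebra. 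Second, and more serious, is the claim that exactness of $S$ forces $[S]=[M]$ in $H_2(T^*M)$, which is what you use to get $\chi\big(HF^*(M,S)\big)=\chi(M)$. Exactness is the condition $\alpha|_S=df$ and gives no formal control on the homology class. Concretely: $T^*S^2$ contains embedded (non-exact) Lagrangian tori, and any Lagrangian torus in $T^*S^2$ is null-homologous (writing $[S]=d[M]$, the self-intersection formula for Lagrangians gives $\chi(S)=2d^2$, so $\chi=0$ forces $d=0$). Your argument must therefore rule out \emph{exact} Lagrangian tori, but assuming $[S]=[M]$ assumes precisely the point at issue; the statement that the projection $S\to M$ has degree $\pm 1$ is itself one of the hard steps in nearby-Lagrangian theorems, not a consequence of exactness.

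The module-rigidity step is also asserted rather than proved, and it carries all the weight exactly where it is weakest: for $T^2$ the constraint $\chi(\mathcal{M}_S)=\chi(T^2)=0$ is vacuous, so rigidity over the exterior algebra has no numerical input at all; moreover, since you never invoke connectedness of $S$ at this stage, you cannot even exclude $\mathcal{M}_S\cong A\oplus A$, which is unital, self-dual, and has $\chi=0$. For contrast, the paper avoids all three issues by generating with a cotangent fiber $L$ rather than the zero section: Abouzaid's theorem gives $\text{Fuk}(T^*M)\subseteq\text{Mod}^b C$ with $C=CW^*(L,L)$ known explicitly as a quiver ($\bbR[z]$ with $\abs{z}=-1$ for $S^2$; the $m,n,h$ quiver for $T^2$), and it never needs to identify the object $S$ up to quasi-isomorphism at all. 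For $S^2$, a concentration lemma (Lemma \ref{concentrated}) plus connectedness pins down $H^*(S)\cong H^*(S^2)$; for $T^2$, the purely formal cancellation $\chi(\text{hom}^*(V_S,V_S))=(1-1-1+1)\,\chi(\text{hom}^*(V^*,V^*))=0$ yields $\chi(S)=0$ with no module classification whatsoever. If you want to pursue your route, the needed repairs (generation with local systems, degree one, rigidity) amount to reproving substantial parts of the literature your argument is meant to bypass; the fiber-side argument is the one that actually closes.
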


To this end, in Chapter 2, we will review the basic definitions of symplectic geometry. In Chapter 3, we will review cellular (co)homology of spaces through examples. In Chapter 4, we will recall the definition of Floer cohomology of two Lagrangians and Fukaya category of symplectic manifolds without going into details. In Chapter 5, we will recall the definition of differential graded (dg) and $A_{\infty}$-categories. We will also present some useful categorical results. Finally, in Chapter 6, we will prove our main theorems.

\subsection{Acknowledgements} This paper is the product of a summer project undertaken by the author when he was a 2nd year undergraduate at the University of Chicago. The project was supervised by Dogancan Karabas. I appreciate his mentoring as well as his numerous suggestions throughout the paper-writing process.

\section{Symplectic Geometry}
Our main reference for this section is \cite{daSilva}.
\subsection{Symplectic Manifolds}

\begin{defn} Let $\omega$ be a differential 2-form on a smooth manifold $M$. We say that $\omega$ is \textbf{symplectic} if $\omega$ is closed (i.e. $d\omega=0$) and if $\omega_p$ is \textit{non-degenerate} for all $p\in M$. In other words, for all $p\in M$, $\omega_p(v,w)=0$ for all $w\in T_p(M)$ implies $v=0$.
\par A \textbf{sympletic manifold} $(M,\omega)$ is a manifold $M$ with symplectic form $\omega$.
\end{defn}

\begin{rem}
If $(M,\omega)$ is a symplectic manifold, then $\dim M$ is even.
\end{rem}

\begin{defn}
Let $(M,\omega_1)$ and $(N,\omega_2)$ be symplectic manifolds. Then, a diffeomorphism $f:M\to N$ is a \textbf{symplectomorphism} if $f^*\omega_2=\omega_1$ where $f^*$ is the pullback of $f$.
\end{defn}

We will now present 2 examples of symplectic manifolds.
\begin{exmp}\label{R2n}
Let $M=\bbR^{2n}$ with coordinates $x_1,x_2,\dots,x_n,y_1,\dots,y_n$ and let $\omega=\sum_{i=1}^n dx_i\wedge dy_i$. We can check that $\omega$ is a symplectic form.
\par We first check closure. We note that
$$d\omega=d(\sum_{i=1}^n dx_i\wedge dy_i)=\sum_{i=1}^n d(dx_i\wedge dy_i)=\sum_{i=1}^n(0\wedge dy_i)-(dx_i\wedge 0)=0$$
and therefore $\omega$ is closed. We now check non-degeneracy. Let $p\in\bbR^n$, and write $v=\sum_{i=1}^n v_1^i \frac{\partial}{\partial x_i} + v_2^i \frac{\partial}{\partial y_i}$ and $w=\sum_{i=1}^n w_1^i \frac{\partial}{\partial x_i} + w_2^i \frac{\partial}{\partial y_i}$ for some $v_j^i,w_j^i\in\bbR$. Suppose
$$\omega_p(v,w)=\sum_{i=1}^n v_1^i w_2^i - v_2^i w_1^i=0$$
for all $w\in T_p(M)$. Then, since $w_1,w_2$ are abitrary, we must have $v=0$ as desired. Therefore $w_p$ is non-degenerate for all $p\in\bbR$, and $\omega$ is thus symplectic.
\end{exmp}

\begin{exmp} The next example we will consider is the \textit{cotangent bundle}. Let $M$ be an $n-$dimensional manifold and let $T^*M$ be its cotangent bundle. Let $M$ have charts of the form $(U,x_1,x_2,\dots,x_n)$ where $x_i:U\to\bbR$ for each $i$. Then for any $x\in U$, we have a basis of $T_x^*M$ consisting of the differentials $(dx_1)_x,\dots,(dx_n)_x$. In other words, each $\xi\in T_x^* M$ can be written as $\xi=\sum_{i=1}^n \xi_i(dx_i)_x$ for some coefficients $\xi_i\in\bbR$. This observation allows us to construct a chart on $T^*M$ of the form
$$(T^*U,x_1,\dots,x_n,\xi_1,\dots,\xi_n)$$
This chart gives the cotangent coordinates on $T^*U\subset T^*M$ associated to the coordinates on $U\subset M$. To show that the cotangent coordinates indeed form a chart, it remains to show that their transition maps on overlapping $T^*U,T^*U'\subset T^*M$ are smooth. Indeed, for two charts on $T^*M$, $(T^*U,x_1,\dots,x_n,\xi_1,\dots,\xi_n)$ and $(T^*U',x'_1,\dots,x'_n,\xi'_1,\dots,\xi'_n)$, if $(x,\xi)\in T^*U\cap T^*U'$ we have
$$\xi=\sum_{i=1}^n \xi_i(dx_i)_x=\sum_{i,j=1}^n \xi_i(\frac{\partial x_i}{\partial x_j'})_x (dx_j')_x$$
where the transition map $\xi_j'=\sum_{i=1}^n \xi_i (\frac{\partial x_i}{\partial x_j'})_x$ is smooth. Additionally, the transition map $x'_i$ is smooth since $M$ is a smooth manifold, and we have thus shown that $T^*M$ is a $2n-$dimensional manifold. We can now introduce its symplectic structure.

\par For $T^*U\subset T^*M$ we can define a symplectic form $\omega$ similarly to how we did on $\bbR^n$. Namely, 
$$\omega=\sum_{i=1}^n dx_i\wedge d\xi_i$$
 This 2-form is symplectic for the same reasons desribed in Example $\ref{R2n}$. We must check, however, that $\omega$ is coordinate-independent. Firstly we note that we can define
$$\alpha=-\sum_{i=1}^n \xi_i dx_i$$
and observe that $\omega=d\alpha$. If $\alpha$ is coordinate-independent, then so too is $\omega$.
\par To show that $\alpha$ is coordinate-independent, we consider two cotangent coordinate charts $(T^*U,x_1,\dots,x_n,\xi_1,\dots,x_n)$ and $(T^*U',x_1',\dots,x_n',\xi_1',\dots,\xi_n')$. Then, on $T^*U\cap T^*U'$ we have the transition map $\xi_j'=\sum_{i=1}^n\xi_i (\frac{\partial x_i}{\partial x_j'})$. We also have $dx_i=\sum_{i=1}^n(\frac{\partial x_i}{\partial x'_j}) dx'_j$, and therefore

\[\alpha'=-\sum_{j=1}^n \xi'_j dx'_j=-\sum_{j=1}^n \sum_{i=1}^n \xi_i (\frac{\partial x_i}{\partial x'_j}) dx'_j\] $$=-\sum_{i=1}^n \xi_i(\sum_{j=1}^n (\frac{\partial x_i}{\partial x'_j})dx'_j)=-\sum_{i=1}^n \xi_i dx_i=\alpha$$

Therefore $\alpha$ is coordinate independent, and so too is $\omega$. Thus $T^*M$ is a symplectic manifold.
\par We refer to $\omega$ as the \textbf{canonical symplectic form} on $T^*M$ and $\alpha$ as the \textbf{tautological 1-form}. The existence of this tautological form motivates a new definition.
\end{exmp}

\begin{defn}
Let $(M,\omega)$ be a symplectic manifold. We say that $M$ is an \textbf{exact symplectic manifold} if there is some one-form $\alpha$ on $M$ such that $\omega=d\alpha$. We call $\alpha$ a \textbf{Liouville 1-form}.
\end{defn}

We note that the cotangent bundle is an exact symplectic manifold.

\subsection{Lagrangians}

\begin{defn} Let $L$ be a submanifold of a $2n-$dimensional symplectic manifold $(M,\omega)$. We call $L$ a $\textbf{Lagrangian submanifold}$ of $M$ is $\dim L=\frac{1}{2}\dim M=n$ and if the symplectic form $\omega$ vanishes on $L$. In other words, if $i:L\hookrightarrow M$ is the inclusion map, then the pullback $i^*\omega=0$.
\end{defn}

Just as we have defined exact symplectic manifolds, there is a related notion for Lagrangians.

\begin{defn}
Let $M$ be an exact symplectic manifold with Liouville 1-form $\alpha$. Let $L$ be a Lagrangian. We say that $L$ is \textbf{exact} if $\alpha|_L=df$ for some function $f:L\to\bbR$.
\end{defn}

\begin{rem}\label{alphaomegavanish}
If $\alpha|_L=df$, then $\omega|_L=0$.
\end{rem}

We will now provide two examples of exact Lagrangian submanifolds of the cotangent bundle. Let $M$ be an $n-$dimensional manifold. Then $T^*M$ is a $2n-$dimensional exact symplectic manifold with the tautological 1-form $\alpha$.

\begin{exmp}
We define the \textbf{zero section} $M_0$ of $T^*M$
$$M_0=\{(x,\xi)\in T^*M\mid \xi=0\text{ in } T_x^*M\}$$
We note first that this is an $n-$dimensional submanifold with charts $(U,x_1,\dots,x_n)$ inherited from $M$. Moreover, for $U\subset M$, we have that $M_0\cap T^* U$ is given by the equations $\xi_1,\dots,\xi_n=0$. As a result, the tautological form $\alpha=\sum\xi_i dx_i$ vanishes on $M_0\cap T^*U$. Then, if $i:M_0\hookrightarrow T^*M$ is the inclusion map, $i^*\alpha=0$ and therefore the zero section is an exact Lagrangian.
\end{exmp}

\begin{exmp}
Let $x\in M$, and let $F_x=\{(x,\xi)\mid \xi\in T_x^* M\}$ be a cotangent fibre of $M$. Then $F_x$ is an $n-$dimensional manifold with coordinate chart $(F_x,\xi_1,\dots,\xi_n)$ inherited from the cotangent coordinate chart of $T^*M$. We will now show that the tautological 1-form $\alpha$ vanishes on $F_x$. Because the $x_1,\dots,x_n$ are fixed by $x$, the differentials $dx_i$ vanish, and thus so does the tautological form $\alpha=\sum \xi_i dx_i$. Then $i^*\alpha=0$ similarly to the previous example. Therefore $F_x$ is an exact Lagrangian submanifold of $T^*M$.
\end{exmp}

\section{Homology and Cohomology}
We will mostly follow \cite{Hatcher} in this section.
\subsection{Cellular Homology}
This section will explain how to calculate the homologies of a manifold by using a handle-body decomposition. The calculation of the homologies will be explained using the circle $S^1$ as an example. Further examples will be presented at the end of the section.

\begin{defn} Let $M$ be an $n-$dimensional manifold, and let $D^i$ be the $i-$dimensional closed disk. Then, $M$ can be built by attaching various $i-$handles $D^i\times D^{n-i}$ for $0\leq i\leq n$. Each $i-$handle is attached along $\partial D^{i}\times D^{n-i}=S^{i-1}\times D^{n-i}$. Handles are attached in non-decreasing fashion (in terms of index) and each handle is attached to the boundary of the union of the previously-attached handles. The various $i-$handles needed to build $M$ is referred to as the manifold's \textbf{handlebody decomposition}.
\end{defn}

\begin{defn}
A \textbf{chain complex} is a collection of vector spaces and linear maps between them:
$$\dots V_2\xrightarrow{d_2} V_1\xrightarrow{d_1} V_0\xrightarrow{d_0} V_{-1}\dots$$
such that $d_i d_{i+1}=0$.
\end{defn}

In this section, we will use real-coefficient vector spaces.

\begin{exmp}
Let $M=S^1$. Then $\dim M=1$. As shown in the Figure \ref{s1handle}, $S^1$ can be decomposed into one $0-$handle $D^0\times D^1$ and one $1-$handle $D^1\times D^0$, attached to the boundary of the zero handle along the two points that make up $\partial D^1\times D^0$.

\begin{figure}[h]
\includegraphics[scale=0.35]{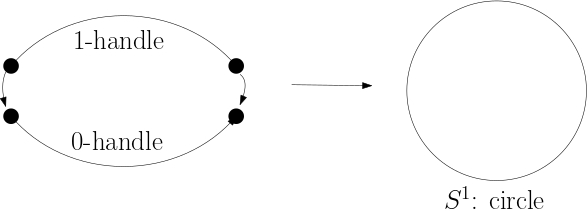}
\caption{A handlebody decomposition of the circle $S^1$}
\label{s1handle}
\end{figure}

\par From this handlebody decomposition, we can build a chain complex, where $\dim V_i=\text{number of i-handles}$. Thus, for this example, $V_0\simeq\bbR$ and $V_1\simeq\bbR$. We also note that $V_i=0$ for all $i\neq 0,1$. Thus our chain complex for $S^1$ looks like this so far
$$\dots 0\xrightarrow{d_2}\bbR\xrightarrow{d_1}\bbR\xrightarrow{d_0} 0\dots$$
and it remains to determine the maps $d_i$.

\begin{defn}
The maps $d_i:V_i\to V_{i-1}$ are called \textbf{boundary homomorphisms}. We will explain how to determine these maps through the example of $S^1$.
\end{defn}

We will calculate $d_1:V_1=\bbR\to V_0=\bbR$. Let $a$ be a basis for $V_1$ and $b$ be a basis for $V_0$. Since $d_1$ is linear, it is sufficient to understand the effect of the map on the basis elements of the vector spaces. Thus we can write $d_1$ in the form
$$d_1:a\to n_{ab} b$$
It remains to calculate $n_{ab}$. Since $d_1$ is the attaching map between the $1-$handle and the $0-$handle, we must consider the former is attached to the latter. In particular, we must consider the handles without their thickening.

\begin{defn} An $i-$handle $D^i \times D^{n-i}$ is the \textbf{thickening} of $D^i$. $D^i$ is referred to as an \textbf{$i-$cell}.
\end{defn}

We depict $S^1$'s cell decomposition (handlebody decomposition without thickening) in Figure \ref{s1cell}. To calculate the map $d_1$, we first assign orientations to the handles as shown below. 

\begin{figure}[h]
\includegraphics[scale=0.45]{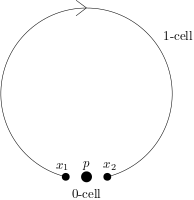}
\caption{A 	cell decomposition of the circle $S^1$.}
\label{s1cell}
\end{figure}

Next we, define $h:\partial D^1\to D^0$ be the attaching map, and let $p\in D^0$. Then $n_{ab}$ is defined as the "signed" count of the elements of $h^{-1}(p)$. In this case, $h^{-1}(p)$ consists of two points $x_1,x_2\in D^1$. The point $x_1$ is the starting point of $D^1$ in the given orientation and so it contributes a $-1$ sign to $n_{ab}$, while $x_2$ is the endpoint of $D^1$ and contributes at $+1$ sign to $n_{ab}$. Therefore $n_{ab}=0$ which tells us that
$$d_1:a\to 0\cdot b$$
is the zero map. Moreover,
$$d_0: \bbR\to 0$$
$$d_2: 0\to\bbR$$
are both linear homomorphisms and are thus necessarily the zero map. We can update our chain complex
$$\dots 0\xrightarrow{0}\bbR\xrightarrow{0}\bbR\xrightarrow{0} 0\dots$$
and now have the necessary information to calculate the homologies.

\begin{defn} For a manifold $M$, the $n-$th homology $(n\in\bbZ)$ with real coefficients $H_n(M,\bbR)$ is defined by $H_n(M,\bbR)=\frac{\text{Ker}(d_n)}{\text{Image}(d_{n+1})}$.
\end{defn}

The following is well known.

\begin{thm}
$H_n(M,\bbR)$ is an invariant of the topological space $M$.
\end{thm}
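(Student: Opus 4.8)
The plan is to deduce invariance by comparing the cellular complex above with \emph{singular} homology, which is defined purely topologically and so is manifestly invariant. First I would recall the singular chain complex: for each $n\ge 0$ let $C_n(M)$ be the real vector space with basis the set of continuous maps $\sigma\colon\Delta^n\to M$ of the standard $n$-simplex into $M$, and let $\partial_n\colon C_n(M)\to C_{n-1}(M)$ be the alternating sum of restrictions to faces. Write $H_n^{\mathrm{sing}}(M,\bbR)$ for the resulting homology. Since this construction mentions only continuous maps into $M$, any homeomorphism $f\colon M\to N$ yields a chain map $f_\#$ and hence maps $f_*\colon H_n^{\mathrm{sing}}(M,\bbR)\to H_n^{\mathrm{sing}}(N,\bbR)$; functoriality, $(g\circ f)_*=g_* f_*$ and $(\mathrm{id})_*=\mathrm{id}$, forces each $f_*$ to be an isomorphism. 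Thus singular homology is a topological invariant (indeed, the usual prism argument upgrades this to homotopy invariance), and it suffices to show that the cellular complex computes it.

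Second, I would set up the skeletal filtration. Let $M^{(k)}\subset M$ be the union of all handles of index at most $k$. The decisive observation is that attaching a $k$-handle $D^k\times D^{n-k}$ along $S^{k-1}\times D^{n-k}$ is, up to homotopy, the same as attaching a $k$-cell along its core sphere $S^{k-1}$; consequently $M^{(k)}/M^{(k-1)}$ is homotopy equivalent to a wedge of $k$-spheres, one for each $k$-handle. Using excision and this identification I would compute the relative singular groups
\[
H_n\bigl(M^{(k)},M^{(k-1)}\bigr)\cong
\begin{cases}
\bbR^{\,c_k}, & n=k,\\
0, & n\neq k,
\end{cases}
\]
where $c_k$ is the number of $k$-handles, so that $H_k(M^{(k)},M^{(k-1)})$ is canonically the cellular group $V_k$ above. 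The long exact sequences of the pairs $(M^{(k)},M^{(k-1)})$ then give $H_n(M^{(k)})=0$ for $n>k$ and show that the inclusion $M^{(k)}\hookrightarrow M$ induces an isomorphism on $H_n$ once $k>n$.

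Finally, I would identify the two theories. A diagram chase through these long exact sequences shows that the cellular boundary $d_k\colon V_k\to V_{k-1}$ agrees with the composite
\[
H_k\bigl(M^{(k)},M^{(k-1)}\bigr)\xrightarrow{\ \partial\ }H_{k-1}\bigl(M^{(k-1)}\bigr)\xrightarrow{\ j\ }H_{k-1}\bigl(M^{(k-1)},M^{(k-2)}\bigr),
\]
and that $\ker d_k/\operatorname{im}d_{k+1}\cong H_k^{\mathrm{sing}}(M,\bbR)$. Since the right-hand side is a topological invariant by the first step, so is the left-hand side, which is precisely $H_k(M,\bbR)$ as defined above. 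The main obstacle is this comparison step: establishing that a $k$-handle really is a $k$-cell up to homotopy (so that the relative groups come out free on the handles) and then verifying, via the geometry of the attaching maps, that the connecting homomorphism reproduces the signed-count boundary map $d_k$ defined earlier. Everything else is formal once these two facts are in place.
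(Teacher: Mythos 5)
The paper itself offers no proof of this statement: it is recorded as ``well known,'' with the section's reference (Hatcher) standing in for the argument. Your proposal supplies essentially the proof that reference would give, so there is no gap in the sense of a wrong turn, but it is worth spelling out the comparison. You reduce invariance to the topological invariance of \emph{singular} homology (immediate from functoriality under homeomorphisms) and then identify the handle-decomposition complex with the singular theory via the skeletal filtration $M^{(0)}\subset M^{(1)}\subset\dots\subset M$, excision, and the long exact sequences of the pairs $(M^{(k)},M^{(k-1)})$. This is the standard route, and it buys strictly more than the statement asks for: it simultaneously shows that $H_n(M,\bbR)$ is independent of the chosen handle decomposition, a fact the paper needs anyway (e.g.\ for the well-definedness of the Euler characteristic, which the paper deduces from this same theorem). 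The two points you flag as the ``main obstacle'' are indeed the only nontrivial inputs: (i) a $k$-handle $D^k\times D^{n-k}$ deformation retracts onto its core $D^k\times\{0\}$, so that up to homotopy equivalence the handle filtration is a CW filtration and $H_n(M^{(k)},M^{(k-1)})$ is free on the $k$-handles, concentrated in degree $k$; and (ii) the composite $H_k(M^{(k)},M^{(k-1)})\xrightarrow{\partial}H_{k-1}(M^{(k-1)})\xrightarrow{j}H_{k-1}(M^{(k-1)},M^{(k-2)})$ equals the signed-count boundary map $d_k$ of the paper (the cellular boundary formula, phrased as a degree computation). Both are standard and proved in the cited reference, but in a self-contained write-up they would have to be carried out rather than asserted; your outline correctly isolates them, and, as you say, the remaining diagram chase is formal.
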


Thus we have:
$$H_0(S^1,\bbR)=\frac{\text{Ker}(d_0)}{\text{Image}(d_1)}=\bbR/0=\bbR$$
$$H_1(S^1,\bbR)=\frac{\text{Ker}(d_1)}{\text{Image}(d_2)}=\bbR/0=\bbR$$

The rest of the homologies $H_n(S^1,\bbR)$ for $n\neq 0,1$ are zero, and can be covered with the following proposition.
\end{exmp}

\begin{prop}\label{0 homology} Let $M$ be an $n-$dimensional manifold. Then, for $i>n$ or $i< 0$, $H_i(M,\bbR)=0$
\end{prop}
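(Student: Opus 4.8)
The plan is to reduce the statement to the vanishing of the chain groups $V_i$ outside the range $0 \le i \le n$, after which the conclusion follows immediately from the definition $H_i(M,\bbR) = \text{Ker}(d_i)/\text{Image}(d_{i+1})$.

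First I would recall that the chain complex associated to the handlebody decomposition of $M$ satisfies $\dim V_i = (\text{number of } i\text{-handles of } M)$. By definition an $i$-handle has the form $D^i \times D^{n-i}$, and such a handle only makes sense when $0 \le i \le n$: for $i < 0$ there is no $i$-cell $D^i$, while for $i > n$ the complementary factor $D^{n-i}$ is undefined. Consequently $M$ has no handles of index $i < 0$ or $i > n$, so $V_i = 0$ for every such $i$.

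Next I would feed this into the definition of homology. For any $i$ with $V_i = 0$, the boundary homomorphism $d_i : V_i \to V_{i-1}$ is the zero map out of the trivial vector space, so its kernel $\text{Ker}(d_i) \subseteq V_i$ is also trivial. Since the numerator of $H_i(M,\bbR) = \text{Ker}(d_i)/\text{Image}(d_{i+1})$ is then $0$, the quotient is $0$ regardless of the image of $d_{i+1}$. Applying this to every $i < 0$ and every $i > n$ gives the claim.

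There is no genuine obstacle here: the entire content is the observation that handles exist only in degrees $0 \le i \le n$, which is built into the handlebody decomposition. The only point worth stating carefully is that a quotient with trivial numerator is trivial, so that the vanishing of the chain group $V_i$ immediately forces the vanishing of the homology $H_i(M,\bbR)$.
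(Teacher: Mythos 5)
Your proposal is correct and follows essentially the same route as the paper: both arguments observe that a handlebody decomposition of an $n$-dimensional manifold has no $i$-handles for $i<0$ or $i>n$, hence $V_i=0$ in those degrees, and then conclude $H_i(M,\bbR)=0$ directly from the definition of homology. The only cosmetic difference is that you isolate the single clean observation that a trivial numerator $\mathrm{Ker}(d_i)\subseteq V_i=0$ forces the quotient to vanish, whereas the paper writes out the zero maps on both sides; the content is identical.
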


\begin{proof}
Since $M$ is $n-$dimensional, there are no $i-$handles for $i>n$. Similarly, there are no $i-$handles for $i<0$, so the relevant chain complex takes the following form:
$$\dots0\xrightarrow{d_{n+2}} 0\xrightarrow{d_{n+1}} V_n\xrightarrow{d_n}\dots V_0\xrightarrow{d_0} 0\xrightarrow{d_{-1}}0\dots$$

Suppose $i>n$. Then $d_i: 0\to V_{i-1}$ is necessarily the zero map, as is $d_{i+1}:0\to 0$
$$H_i(M,\bbR)=\frac{\text{Ker}(d_i)}{\text{Image}(d_{i+1})}=0/0=0$$
The proof is similar for the case where $i< 0$.
\end{proof}

Thus we have calculated the homologies of the circle. We will use the sphere $S^2$ and the torus $T$ as second and third examples.

\begin{exmp} Let $S^2$ be a (two-dimensional) sphere. Then its handlebody decomposition consists of one 0-handle ($D^0\times D^2$) and one 2-handle ($D^2\times D^0$). Our chain complex takes the following form
$$\dots 0\rightarrow V_2 \xrightarrow{d_2}V_1 \xrightarrow{d_1} V_0\xrightarrow{d_0} 0\dots$$
$$\dots 0 \xrightarrow{d_3}\bbR\xrightarrow{d_2} 0 \xrightarrow{d_1}\bbR\xrightarrow{d_0} 0\dots$$

In this case, we can observe (without calculation) that both $d_2,d_0:\bbR\to 0$ are the zero map and that both $d_1,d_3$ are the identity map on domain $\{0\}$. Thus we get our homologies:

$$H_0(S^2,\bbR)=\frac{\text{Ker}(d_0)}{\text{Image}(d_1)}=\bbR/0=\bbR$$
$$H_1(S^2,\bbR)=\frac{\text{Ker}(d_1)}{\text{Image}(d_2)}=0/0=0$$
$$H_2(S^2,\bbR)=\frac{\text{Ker}(d_2)}{\text{Image}(d_3)}=\bbR/0=\bbR$$
\end{exmp}

\begin{exmp}
Let $T$ be a (two-dimensional) torus. Its handlebody decomposition consists of one 0-handle ($D^0\times D^2$), two 1-handles ($D^1\times D^1$), and one 2-handle ($D^2\times D^0$) as shown in the Figure \ref{t2handle}.

\begin{figure}[h]
\includegraphics[scale=0.45]{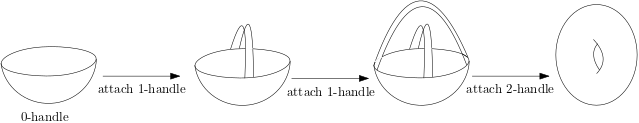}
\caption{A handlebody decomposition of the torus $T^2$.}
\label{t2handle}
\end{figure}

\par Thus we have the following chain complex
$$\dots 0\rightarrow V_2 \xrightarrow{d_2}V_1 \xrightarrow{d_1} V_0\xrightarrow{d_0} 0\dots$$
$$\dots 0 \rightarrow\bbR\xrightarrow{d_2}\bbR^2\xrightarrow{d_1}\bbR\xrightarrow{d_0} 0\dots$$

Let $V_2$ be generated by basis $a$, $V_1$ by $b,c$ and $V_0$ by $d$. We then have the cell-decomposition (no thickening) with assigned orientations as shown in Figure \ref{t2cell}.

\begin{figure}[h]
\includegraphics[scale=0.35]{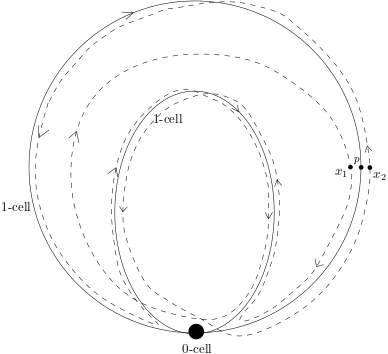}
\caption{A cell decomposition of the torus $T^2$. The 2-handle is attached to the 1-cells and 0-cell along the dotted line.}
\label{t2cell}
\end{figure}

\par We will start by calculating $d_1$. We need determine the following coefficients
$$b\to n_{bd} d$$
$$c\to n_{cd} d$$
We can observe however that both these attachments are equivalent to our circle example. Thus $n_{bd}=n_{cd}=0$, and hence, $d_1=0$. It remains to calculate $d_2$. For this map, we need two coefficients
$$a\to n_{ab} b + n_{ac} c$$

We start with $n_{ab}$. Picking a point $p$ in the  1-cell generated by $b$, we see that for the attaching map $h$, $h^{-1}(p)$ has two points ($x_1,x_2$). At $x_1$, the orientation of the 1-cell and the orientation of the boundary of the 2-cell are the same, so $x_1$ has sign $+1$. At $x_2$, the two orientations are opposite so $x_2$ has sign $-1$. Then $n_{ab}=1-1=0$. Similarly $n_{ac}=0$, so $d_2=0$. We can then finish our chain complex
$$\dots\bbR\xrightarrow{0}\bbR^2\xrightarrow{0}\bbR\xrightarrow{0} 0\dots$$
and calculate the homologies
$$H_0(T,\bbR)=\bbR$$
$$H_1(T,\bbR)=\bbR^2$$
$$H_2(T,\bbR)=\bbR$$

\end{exmp}

\begin{defn} For $g\geq 1$, the genus $g$-surface is defined as the connected sum $\Sigma_g:=T\#T\#T\dots\#T \;\;\; \text{($g$ times)}$. A genus 0 surface is a sphere.
\end{defn}

The calculation of the homologies of $\Sigma_g$ follows nicely from the homologies of the torus $T$, as can be seen in the following example.

\begin{exmp}\label{genusg}
We first note that the cell decomposition of $\Sigma_g$ is
\begin{itemize}
\item 1 two-cell
\item 2g one-cells
\item 1 zero-cell
\end{itemize}
Hence our cell complex is
$$0\xrightarrow{d_3}\bbR\xrightarrow{d_2}\bbR^{2g}\xrightarrow{d_1}\bbR\xrightarrow{d_0} 0$$
We note then that $d_0$ is the zero map and also that \textit{im}($d_3$)=0. We now determine the rest of the boundary homomorphisms.
\par Let the 0-cell be generated by $a$ and let the $1-cells$ by generated by $b_1,\dots,b_{2g}$. Each of these attaching maps $b_i\to n_{{b_i} a} a$ is similar to the 1-torus case and thus $n_{{b_i} a}=0$ for all $1\leq i\leq 2g$. Therefore $d_1$ is the zero map.
\par Let the 2-cell be generated by $c$. Then each attaching map $c\to n_{c {b_i}} b_i$ is similar to the 1-torus case and thus $n_{c _{b_i}}=0$ for all $1\leq i\leq 2g$. Therefore $d_2=0$. We can now calculate the homologies of $\Sigma_g$.
$$H_0(\Sigma_g,\bbR)=\textit{ker}(d_0)/\textit{im}(d_1)=\bbR/0=\bbR$$
$$H_1(\Sigma_g,\bbR)=\textit{ker}(d_1)/\textit{im}(d_2)=\bbR^{2g}/0=\bbR^{2g}$$
$$H_2(\Sigma_g,\bbR)=\textit{ker}(d_2)/\textit{im}(d_3)=\bbR/0=\bbR$$
\end{exmp}

We have the following classical result.

\begin{thm}
An orientable, connected, topological 2-dimensional manifold is a genus $g$-surface for some $g\geq 0$.
\end{thm}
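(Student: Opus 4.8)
The plan is to reduce the classification to a purely combinatorial statement about edge-identification words, following the classical Dehn--Heegaard algorithm. Throughout I read the hypothesis as asserting that $M$ is closed (compact without boundary), which is the relevant case here and without which the statement fails (e.g. $\bbR^2$). First I would establish that $M$, being a compact, connected, topological $2$-manifold, admits a triangulation; this is Rad\'o's theorem, special to dimension two, which I would invoke as a black box. A triangulation presents $M$ as a finite simplicial complex, and by adjoining the triangles one at a time one can cut $M$ open along edges until what remains is a single disk (a polygon) whose boundary edges are identified in pairs. Recording the labels and orientations of these boundary edges counterclockwise produces a word $W$ in symbols $a,a^{-1},b,b^{-1},\dots$ in which every generator occurs exactly twice, and the homeomorphism type of $M$ is recovered by performing the indicated identifications.

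Next I would exploit orientability. The key point is that $M$ is orientable precisely when every generator appears in $W$ once as $x$ and once as $x^{-1}$, rather than twice with the same exponent (the latter produces an embedded M\"obius band, i.e. a crosscap). I would check that each cut-and-paste move used below preserves both the homeomorphism type of the quotient and this ``each letter appears with both signs'' property, so that no crosscap is ever introduced.

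The heart of the argument is the reduction algorithm: a finite sequence of elementary moves on $W$, each realized by cutting the polygon along a diagonal and regluing along an identified pair of edges, none of which changes the quotient. Concretely I would (i) cancel adjacent inverse pairs $xx^{-1}$ whenever they occur, unless $W$ has been reduced to length two; (ii) arrange by relabeling that all polygon vertices are identified to a single point of $M$; and (iii) bring each remaining pair of edges into the linked configuration $x\cdots y\cdots x^{-1}\cdots y^{-1}$ and then, by a standard pair of cuts, into an adjacent commutator block $xyx^{-1}y^{-1}$. Iterating (iii) reduces $W$ either to the trivial word $aa^{-1}$, whose quotient is $S^2=\Sigma_0$, or to the normal form
$$W=a_1 b_1 a_1^{-1} b_1^{-1}\,a_2 b_2 a_2^{-1} b_2^{-1}\cdots a_g b_g a_g^{-1} b_g^{-1},$$
which is exactly the fundamental polygon of the connected sum of $g$ tori, so that $M$ is homeomorphic to $\Sigma_g$.

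The step I expect to be the main obstacle is (iii): making precise that any linked pair can be converted into a commutator block by legal cuts, and that once a block $a_i b_i a_i^{-1} b_i^{-1}$ has been assembled the moves needed to clear the remaining letters can be carried out without disturbing the blocks already built. Keeping this bookkeeping honest, and verifying at every move that the orientability condition is maintained, is where the real work lies; by contrast the triangulability input and the final recognition of the normal form as $\Sigma_g$ are comparatively routine.
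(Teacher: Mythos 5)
The paper does not actually prove this statement: it is introduced with ``We have the following classical result'' (the classification of closed orientable surfaces) and used as a black box, so there is no proof of record to compare yours against. Your outline is the standard classical argument --- Rad\'o's triangulation theorem, assembling the triangles into a single fundamental polygon with edges identified in pairs, and cut-and-paste reduction of the edge word to $aa^{-1}$ or to the normal form $a_1b_1a_1^{-1}b_1^{-1}\cdots a_gb_ga_g^{-1}b_g^{-1}$ --- and it is sound as an outline; it buys an actual proof where the paper offers only a citation. You are also right to insert compactness into the hypotheses: as literally stated the claim is false ($\bbR^2$ is an orientable, connected, topological $2$-manifold that is no genus-$g$ surface), and compactness is implicit in the paper, whose genus-$g$ surfaces are closed. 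The honest caveat is the one you raise yourself: steps (ii) and (iii) --- forcing all polygon vertices to map to a single point, converting each linked pair into an adjacent commutator block without disturbing blocks already assembled, and verifying that every move preserves both the homeomorphism type of the quotient and the ``each letter occurs with both signs'' condition --- are precisely the mathematical content of the theorem, and you have deferred them rather than done them. So what you have is a correct strategy with the central combinatorial lemma left open; the full bookkeeping is carried out in, e.g., Massey's \emph{Algebraic Topology: An Introduction}, Chapter 1, and either completing it or citing it explicitly is what is needed to turn this outline into a proof.
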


In this instance, the homologies are all different, and thus these homologies give us one way to classify such a surface. However, another way to classify the surface is via the \textbf{Euler characteristic}.

\begin{defn}
Let $M$ be a manifold. We recall then that $\dim V_i$ is the number of $i-$cells in the cell decomposition of M. We define the Euler characteristic $\chi$ of $M$ as $\chi(M)=\sum_{i\in\bbZ} (-1)^i \dim V_i$.
\end{defn}

\begin{rem}
The Euler characteristic is a priori not well defined, but we will show it is well defined.
\end{rem}

We can show that the Euler characteristic is a homology invariant via the following theorem.

\begin{thm}\label{EulerCharRewrite} Let $M$ be a manifold. Then $\chi(M)=\sum_{i\in\bbZ} (-1)^i \dim H_i(M,\bbR)$.
\end{thm}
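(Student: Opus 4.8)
The plan is to reduce the claim to the rank–nullity theorem applied termwise to the chain complex, together with a single reindexing of an alternating sum. The starting observation is that, by Proposition~\ref{0 homology}, only finitely many of the $V_i$ and of the $H_i(M,\bbR)$ are nonzero (they vanish for $i<0$ and $i>n$), so both alternating sums are finite and every manipulation below is legitimate; in particular there are no boundary terms left dangling at infinity when we shift indices.

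First I would fix notation for the three dimensions attached to each boundary homomorphism $d_i\colon V_i\to V_{i-1}$. Let $z_i=\dim\ker(d_i)$ and $r_i=\dim\operatorname{im}(d_i)$. Directly from the definition $H_i(M,\bbR)=\ker(d_i)/\operatorname{im}(d_{i+1})$ we get
\[
\dim H_i(M,\bbR)=z_i-\dim\operatorname{im}(d_{i+1})=z_i-r_{i+1}.
\]
Rank–nullity applied to $d_i$ then supplies the second, purely termwise, identity
\[
\dim V_i=z_i+r_i.
\]

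With these two identities in hand the proof is bookkeeping. Substituting the second into $\chi(M)=\sum_{i}(-1)^i\dim V_i$ splits it as $\sum_i(-1)^i z_i+\sum_i(-1)^i r_i$, while the first gives $\sum_i(-1)^i\dim H_i(M,\bbR)=\sum_i(-1)^i z_i-\sum_i(-1)^i r_{i+1}$. The $z_i$-sums are identical, so it remains to match the $r$-contributions. Reindexing the last sum by $j=i+1$ and using $(-1)^i=-(-1)^{i+1}$ turns $-\sum_i(-1)^i r_{i+1}$ into $\sum_j(-1)^j r_j=\sum_i(-1)^i r_i$, which is exactly the surviving term in the expansion of $\chi(M)$. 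Hence $\sum_i(-1)^i\dim H_i(M,\bbR)=\chi(M)$.

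I do not expect a genuine obstacle here; the entire content is the rank–nullity theorem plus an alternating-sum telescoping. The only point that genuinely needs care is the index shift, where one must be sure that the boundedness of the complex guaranteed by Proposition~\ref{0 homology} makes the reindexed sum have the same support as the original, so that no stray $r_i$ term is created or lost. As a bonus, this identity settles the earlier remark that $\chi(M)$ is well defined: the right-hand side depends only on the homology groups, which are topological invariants, so the alternating sum of cell counts on the left is independent of the chosen handlebody decomposition.
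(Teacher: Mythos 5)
Your proposal is correct and is essentially the paper's own argument: both rest on rank--nullity applied to each $d_i$ (your $\dim V_i=z_i+r_i$ and $\dim H_i=z_i-r_{i+1}$ are exactly the paper's two short exact sequences, with $B_i=\operatorname{im}(d_{i+1})=r_{i+1}$) followed by cancellation of the image terms in the alternating sum. The only cosmetic difference is that the paper substitutes to get $\dim V_i=\dim B_i+\dim B_{i-1}+\dim H_i$ and sums once, while you compute the two sums separately and reindex; your explicit appeal to boundedness via Proposition~\ref{0 homology} is a nice touch the paper leaves implicit.
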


\begin{proof}
Consider an arbitrary cell decomposition of M and its associated chain complex
$$0\rightarrow V_n\xrightarrow{d_n}\dots\xrightarrow{d_2} V_1\xrightarrow{d_1} V_0\xrightarrow 0\dots$$
and define $Z_i=\text{Ker}(d_i)$, $B_i= \text{Im}( d_{i+1})$, $H_i=H_i(M,\bbR)$. Then the following are short exact sequences
$$0\rightarrow Z_i\rightarrow V_i\rightarrow B_{i-1}\rightarrow 0$$
$$0\rightarrow B_i\rightarrow Z_i\rightarrow H_i\rightarrow 0$$
By Rank-Nullity, we have
$$\dim V_i=\dim Z_i + \dim B_{i-1}$$
$$\dim Z_i=\dim B_i + \dim H_i$$
By substituting the second equation into the first, we get
\begin{center}$\dim V_i=\dim B_i + \dim B_{i-1} +\dim H_i\implies (-1)^i \dim V_i=(-1)^i(\dim B_i +\dim B_{i-1} +\dim H_i)\implies \sum_{i\in\bbZ} (-1)^i \dim V_i=\sum_{i\in\bbZ} (-1)^i \dim H_i$ \end{center}
as desired.
\end{proof}

\begin{cor}
$\chi(M)$ is well-defined.
\end{cor}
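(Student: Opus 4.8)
The statement to prove is that $\chi(M)$ is well-defined, and the concern is that the definition $\chi(M)=\sum_{i\in\bbZ}(-1)^i\dim V_i$ was made in terms of a choice of cell decomposition, so a priori two different cell decompositions could yield two different values. The plan is to eliminate this dependence by routing everything through the homology, which we have already declared to be a topological invariant independent of any such choice.

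First I would observe that Theorem \ref{EulerCharRewrite} was proved for an \emph{arbitrary} cell decomposition of $M$: given any cell decomposition with chain groups $V_i$, the rank-nullity argument there yields the identity $\sum_{i\in\bbZ}(-1)^i\dim V_i=\sum_{i\in\bbZ}(-1)^i\dim H_i(M,\bbR)$. In particular, for two different cell decompositions, say with chain groups $V_i$ and $V_i'$, we obtain
$$\sum_{i\in\bbZ}(-1)^i\dim V_i=\sum_{i\in\bbZ}(-1)^i\dim H_i(M,\bbR)=\sum_{i\in\bbZ}(-1)^i\dim V_i'.$$
The middle expression is the crucial one: it makes no reference whatsoever to a cell decomposition.

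Next I would invoke the earlier theorem asserting that $H_n(M,\bbR)$ is an invariant of the topological space $M$. This guarantees that each $\dim H_i(M,\bbR)$, and hence the alternating sum $\sum_{i\in\bbZ}(-1)^i\dim H_i(M,\bbR)$, depends only on $M$ and not on how we chose to decompose it into cells. Chaining the two displayed equalities then shows that the integer $\sum_{i\in\bbZ}(-1)^i\dim V_i$ is the same for every cell decomposition, which is exactly the assertion that $\chi(M)$ is well-defined.

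There is no serious obstacle here: the content is entirely front-loaded into Theorem \ref{EulerCharRewrite} and the invariance of homology, and this corollary is essentially a bookkeeping consequence. The only point requiring mild care is to note explicitly that Theorem \ref{EulerCharRewrite} applies to every cell decomposition rather than a single fixed one, so that the invariant right-hand side can be used as a bridge between any two competing decompositions.
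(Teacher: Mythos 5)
Your proposal is correct and follows exactly the paper's own argument: both proofs apply Theorem \ref{EulerCharRewrite} to each of the two competing cell decompositions and then use the topological invariance of $H_i(M,\bbR)$ as the bridge showing the two alternating sums agree. Your only addition is the explicit remark that the theorem holds for an arbitrary decomposition, which the paper leaves implicit but clearly intends.
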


\begin{proof}
Let $M$ be a manifold and let $\chi_1$ and $\chi_2$ be Euler characteristics for two different cell decompositions of $M$. Then, by Theorem \ref{EulerCharRewrite},
$$\chi_1(M)=\sum_{i\in\bbZ} (-1)^i \dim H_i(M,\bbR)=\chi_2(M)$$
since homology is an invariant of $M$.
\end{proof}

\begin{rem} By our remarks in the proof of Proposition \ref{0 homology}, the above definition simplifies to $\chi(M)=\sum_{i=0}^n (-1)^i \dim V_i$ if $M$ is $n-$dimensional.
\end{rem}

\begin{prop}
If $S$ is a genus $g$-surface, then $\chi(S)=2-2g$.
\end{prop}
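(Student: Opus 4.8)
The plan is to apply Theorem \ref{EulerCharRewrite} together with the homology computation for the genus $g$-surface carried out in Example \ref{genusg}. Since the Euler characteristic is now known to be a homology invariant, I am free to compute it from whichever cell decomposition is most convenient, and the natural choice is precisely the minimal decomposition of $\Sigma_g$ recorded earlier.

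First I would recall from Example \ref{genusg} that the homologies of the genus $g$-surface are $H_0(\Sigma_g,\bbR)=\bbR$, $H_1(\Sigma_g,\bbR)=\bbR^{2g}$, and $H_2(\Sigma_g,\bbR)=\bbR$, with all higher and negative homologies vanishing by Proposition \ref{0 homology}. This gives the dimensions $\dim H_0=1$, $\dim H_1=2g$, and $\dim H_2=1$.

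Next I would substitute these dimensions into the formula of Theorem \ref{EulerCharRewrite}. Computing the alternating sum yields
$$\chi(\Sigma_g)=\sum_{i\in\bbZ}(-1)^i\dim H_i(\Sigma_g,\bbR)=\dim H_0-\dim H_1+\dim H_2=1-2g+1=2-2g,$$
which is exactly the claimed value. Alternatively, since the genus $g$-surface was described as having one $0$-cell, $2g$ one-cells, and one $2$-cell, I could instead apply the simplified definition $\chi(M)=\sum_{i=0}^n(-1)^i\dim V_i$ directly to the cell counts, obtaining $1-2g+1=2-2g$ by the same arithmetic; the two routes agree precisely because of the invariance established above.

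I do not anticipate any genuine obstacle here: the result is an immediate corollary of the machinery already assembled, and the only substantive content is the bookkeeping of which homology (or cell) dimensions to plug in. The one point worth stating explicitly is that the case $g=0$ is covered as well, since a genus $0$ surface is the sphere and the formula correctly returns $\chi(S^2)=2$, consistent with the earlier computation $H_0(S^2,\bbR)=\bbR$, $H_1(S^2,\bbR)=0$, $H_2(S^2,\bbR)=\bbR$.
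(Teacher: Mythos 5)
Your proof is correct and matches the paper's: the paper's own proof is simply the observation that the result ``follows from the cell decomposition shown in Example \ref{genusg},'' i.e.\ the count $1-2g+1=2-2g$, which is exactly your alternative route and arithmetically identical to your primary route via Theorem \ref{EulerCharRewrite} (the two coincide here because all boundary maps in that cell complex vanish). No gaps; your treatment is just slightly more explicit than the paper's one-line proof.
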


\begin{proof} This follows from the cell decomposition shown in Example \ref{genusg}.
\end{proof}

Thus we can use the Euler characteristic to uniquely identify orientable surfaces.

\begin{defn}
If $C_1$ and $C_2$ are two chain complexes with
$$C_1: \dots\rightarrow V_i\xrightarrow{d_i}\dots$$
$$C_2: \dots\rightarrow W_i\xrightarrow{e_i}\dots$$
then $C_1\oplus C_2$ is a chain complex
$$C_1\oplus C_2: \dots\rightarrow V_i\oplus W_i\xrightarrow{\begin{pmatrix} d_i & 0\\ 0 &   e_i\end{pmatrix}}\dots$$
\par
Also, if $C$ is a chain complex
$$C: \dots\rightarrow V_i\xrightarrow{d_i}\dots$$
then $C[n]$ is a chain complex
$$C[n]: \dots\rightarrow W_i\xrightarrow{e_i}\dots$$
such that $W_i=V_{i+n}$ and $e_i=(-1)^n d_{i+n}$.
\end{defn}

The Euler characteristic can also be defined directly for bounded chain complexes in a natural way. This gives rise to a few useful properties of the Euler characteristic.

\begin{prop}\label{euler props}
\begin{enumerate}
\item[a)] Given two bounded chain complexes $C$ and $D$, $\chi(C\oplus D)=\chi(C)+\chi(D)$
\item[b)] Given a bounded chain complex $C[s]$ with $s\in\bbZ$, $\chi(C[s])=(-1)^{s} \chi(C)$
\end{enumerate}
\end{prop}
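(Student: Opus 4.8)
The plan is to reduce everything to the Euler characteristic of a bounded chain complex, defined directly as the alternating sum of dimensions of its terms, and to prove both assertions by a direct dimension count. For a bounded chain complex $C$ given by $\dots\to V_i\xrightarrow{d_i}\dots$ with $V_i=0$ for all but finitely many $i$, I take $\chi(C):=\sum_{i\in\bbZ}(-1)^i\dim V_i$; this is a finite sum, so it is well defined. Note that by Theorem \ref{EulerCharRewrite} this agrees with the alternating sum of the homology dimensions, but the proofs below do not even need that fact -- they follow immediately from the definition of the direct sum and the shift on chain complexes given in the preceding definition.

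For part (a), I would simply observe that the $i$-th term of $C\oplus D$ is $V_i\oplus W_i$, whose dimension is $\dim V_i+\dim W_i$. Substituting into the definition gives
\[
\chi(C\oplus D)=\sum_{i\in\bbZ}(-1)^i(\dim V_i+\dim W_i)=\sum_{i\in\bbZ}(-1)^i\dim V_i+\sum_{i\in\bbZ}(-1)^i\dim W_i=\chi(C)+\chi(D),
\]
where splitting the sum is legitimate because both complexes are bounded, so all sums are finite. This is the routine direction and presents no real difficulty.

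For part (b), I would unwind the shift convention. Writing $C[s]$ as $\dots\to W_i\xrightarrow{e_i}\dots$ with $W_i=V_{i+s}$, the signs on the differentials are irrelevant to the Euler characteristic since $\chi$ only sees the dimensions of the terms. Thus
\[
\chi(C[s])=\sum_{i\in\bbZ}(-1)^i\dim W_i=\sum_{i\in\bbZ}(-1)^i\dim V_{i+s}.
\]
Now I would perform the reindexing substitution $j=i+s$, so that $i=j-s$ and $(-1)^i=(-1)^{j-s}=(-1)^{-s}(-1)^{j}=(-1)^{s}(-1)^{j}$. Pulling the constant factor $(-1)^{s}$ out of the sum yields $\chi(C[s])=(-1)^{s}\sum_{j\in\bbZ}(-1)^{j}\dim V_{j}=(-1)^{s}\chi(C)$, as claimed.

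The only point requiring any care -- and the place I would expect a reader to stumble -- is the bookkeeping of signs in the reindexing for part (b): one must be confident that $(-1)^{-s}=(-1)^{s}$ for integer $s$, and that the boundedness of $C$ makes the change of summation index legitimate rather than a formal manipulation of a divergent series. Everything else is a direct substitution into the definition of $\chi$ for bounded complexes.
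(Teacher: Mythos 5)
Your proof is correct and takes essentially the same route as the paper's: part (a) by splitting the alternating sum of dimensions term-by-term over the direct sum, and part (b) by the reindexing $j=i+s$ together with $(-1)^{-s}=(-1)^{s}$. The extra remarks on boundedness and sign bookkeeping are fine but add nothing beyond the paper's argument.
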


\begin{proof}
\begin{enumerate}
\item[a)] Let $C$ consist of vector spaces $V_i^1$ and $D$ consist of vector spaces $V_i^2$. Then
\begin{center}$
\chi(C\oplus D)=\sum_{i\in\bbZ} (-1)^i \dim(V_i^1\oplus V_i^2)=\sum_{i\in\bbZ} (-1)^i(\dim V_i^1+\dim V_i^2)=\sum_{i\in\bbZ} (-1)^i \dim V_i^1 + \sum_{j\in\bbZ} (-1)^j V_1^2=\chi(C)+\chi(D)
$\end{center}

\item[b)] Let $C$ consist of vector spaces $V_i$. Then $C[s]$ consists of vector spaces $W_i=V_{i+s}$. We can then write
\begin{center}$\chi(C[s])=\sum_{i\in\bbZ} (-1)^{i} \dim W_{i}=\sum_{i\in\bbZ} (-1)^{i} \dim{V_{i+s}}=\sum_{j\in\bbZ}(-1)^{j-s}\dim{V_j}=(-1)^s\sum_{j\in\bbZ}(-1)^j \dim{V_j}=(-1)^{s} \chi(C)$\end{center}
as desired.
\end{enumerate}
\end{proof}

\begin{defn}
A \textbf{cochain complex} is similar to a chain complex but with the differential maps increasing the degree by $1$ rather than decreasing the degree by $1$:
$$\dots\rightarrow V^{-1}\xrightarrow{d^{-1}} V^0\xrightarrow{d^0} V^1\xrightarrow{d^1} V^2\rightarrow\dots$$
The homology of a cochain complex is referred to as its \textbf{cohomology}. We can more specifically define cochain complexes and cohomology in the case of a topological space.
\end{defn}

\begin{defn}
Let $M$ be a topological space with an associated chain complex
$$\dots V_2\xrightarrow{d_2} V_1\xrightarrow{d_1} V_0\xrightarrow{d_0} V_{-1}\dots$$ 
Then, define $V^i$ to be the dual groups $\text{Hom}(V_i,\bbR)$, and define the maps $d^i:V^i\to V^{i+1}$ to be the dual maps of the boundary homomorphisms $d_i:V_i\to V_{i-1}$. We then have our cochain complex
$$\dots\rightarrow V^{-1}\xrightarrow{d^{-1}} V^0\xrightarrow{d^0} V^1\xrightarrow{d^1} V^2\rightarrow\dots$$
and define the $n$-th cohomology as $H^n(M,\bbR)=\frac{\text{Ker}(d^n)}{\text{Image}(d^{n-1})}$.
\end{defn}

\begin{rem}
By the Universal Coefficient Theorem, $H^n(M,\bbR)=H_n(M,\bbR)$ if $H_n(M,\bbR)$ is finite-dimensional. This is in particular true because we are working with field coefficients. Therefore, the results of this chapter involving homology are also true for cohomology.
\end{rem}

\section{Floer Cohomology}
We won't give rigorous definitions in this section. See \cite{Auroux} for more information, or for a more rigorous treatment, see \cite{Seidel}
\subsection{Floer Cohomology of Compact Lagrangians}.

\begin{defn}
Let $M$ be an exact symplectic manifold. If $L_1,L_2\subset M$ are compact, exact, oriented Lagrangians, then we can calculate their \textbf{Floer cohomology} $HF^*(L_1,L_2)$ by using the intersection points of $L_1$ and $L_2$ to construct a cochain complex (assuming $L_1$ and $L_2$ intersect transversally and at finitely many points). In this section, we will use $\bbZ_2$-graded chain complexes of the following form:
$$\dots CF^0(L_1,L_2)\xrightarrow{d^0} CF^1(L_1,L_2)\xrightarrow{d^1} CF^0(L_1,L_2)\dots$$

The vector spaces $CF^i(L_1,L_2)$ will be $\bbR$-vector spaces with basis elements of the form $x\in L_1\cap L_2$ such that $\text{deg}(x)=i$. When $M$ is 2-dimensional, the degree of any given basis element $x\in L_1\cap L_2$ is determined by the right-hand rule. Starting at a given $x\in L_1\cap L_2$, we orient our palm in the direction of $L_1$ and let our fingers curl towards $L_2$. If our thumb points into the screen, then $x$ is assigned an even parity (degree 0) and is a basis element for $CF^0(L_1,L_2)$. If our thumb points out of the screen, then $x$ is assigned an odd parity (degree 1) and is a basis element for $CF^1(L_1,L_2)$. See Figure \ref{righthandrule} for the two possible parity scenarios (up to isotopy).

\begin{figure}[h]
\includegraphics[scale=0.45]{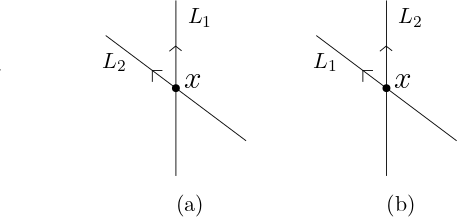}
\caption{The two possible parity scenarios in Floer cohomology: (a) odd parity, (b) even parity}
\label{righthandrule}
\end{figure}

\par We now describe how to determine the differential maps in our cochain complex. The maps will take the form
$$d^i:CF^i(L_1,L_2)\to CF^{i+1}(L_1,L_2)$$
$$x\to\sum_y n_{xy} y$$
where the sum is over $y\in L_1\cap L_2$ of deree $i+1$ and $n_{xy}$ is the (signed) count of the pseudoholomorphic disks of the form shown in Figure \ref{holdisk} (see \cite{Auroux}). By the Riemann Mapping Theorem, the pseudoholomorphic disks can be thought of as usual holomorphic disks if $M$ is 2-dimensional.
\end{defn}

\begin{figure}
\includegraphics[scale=0.45]{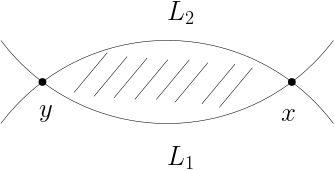}
\caption{A pseudoholomorphic disk used in the determination of Floer cohomology.}
\label{holdisk}
\end{figure}

With the vector spaces and maps of the cochain complex determined, Floer cohomology is just the cohomology of this particular cochain complex. The following gives an example calculation of Floer cohomology.

\begin{exmp}\label{zero section HF} Let $M$ be the cotangent bundle $T^*S^1$, let $L_1$ be as in Figure \ref{cylinder w lagrangians}, and let $L_2$ be its zero section. We let $L_1$ intersect $L_2$ at two points $x$ and $y$ as shown in Figure \ref{cylinder w lagrangians}.
\end{exmp}

\begin{figure}[h]
\includegraphics[scale=0.45]{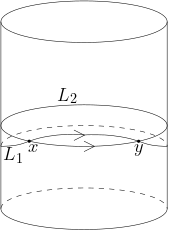}
\caption{Two Lagrangians in $T^*S^1$}
\label{cylinder w lagrangians}
\end{figure}

We first determine the degrees of $x,y\in L_1\cap L_2$. Consider the point $x$, and use the right hand rule. Starting at $x$, we orient our palm in the direction of $L_1$ and let our fingers curl towards $L_2$. By doing so, we see that our thumb points into the screen, which corresponds to an even parity. By considering the right hand rule for the intersection point $y$, we see that it has odd parity (thumb pointing out of the screen). Thus  $CF^0(L_1,L_2)$ has one basis element $x$ and $CF^1(L_1,L_2)$ also has one basis element $y$. So both are one-dimensional $\bbR$-vector spaces. In other words, we have the following cochain complex:
$$\dots\bbR\xrightarrow{d^0}\bbR\xrightarrow{d^1}\bbR\dots$$
We must now calculate the maps
$$d^0:x\to n_{xy} y$$
$$d^1:y\to n_{yx} x$$
by determining $n_{xy}$ and $n_{yx}$.

\par We note that there are two relevant disks bounded by $L_1$ and $L_2$, and that both correspond to $n_{xy}$. One contributes a sign of $+1$ to $n_{xy}$ and the other a sign of $-1$, and hence $n_{xy}=0$. We will not explain how we determine signs here. Similarly, because there are no other disks, $n_{yx}=0$, so our cochain complex is:
$$\bbR\xrightarrow{0}\bbR\xrightarrow{0}\bbR$$
We then calculate the cohomologies in the same way as we did for the usual cohomology:
$$HF^0(L_1,L_2)=\frac{\text{Ker} (d^0)}{\text{Image} (d^1)}=\bbR/0=\bbR$$
$$HF^1(L_1,L_2)=\frac{\text{Ker} (d^1)}{\text{Image} (d^0)}=\bbR/0=\bbR$$

\begin{defn}
Let $(M,\omega)$ be a symplectic manifold and let $H_t:M\to\bbR$ be a family of smooth functions (called \textbf{Hamiltonian}) for $t\in [0,1]$. Then, by non-degeneracy, there is a unique vector field $X_{H_t}$ (called a \textbf{Hamiltonian vector field}) on $M$ such that $\omega(\cdot,X_{H_t})=dH_t$ for each $t\in[0,1]$. Then, there is a family of symplectomorphisms $\rho_t: M \to M$ that is generated by $X_{H_t}$:
$$\begin{cases}\rho_0=\text{id} \\ \frac{d\rho_t}{dt}\circ\rho_t^{-1}=X_{H_t} \end{cases}$$

If there are two submanifolds $A$ and $B$ such that $A=\rho_1(B)$, then we say that $A$ is \textbf{Hamiltonian isotopic} to $B$, and we call $\rho_t$ a \textbf{Hamiltonian isotopy}.
\end{defn}

$L_1$ in Example \ref{zero section HF} can be thought of as if it is obtained by a Hamiltonian isotopy of $L_2$. We can consider Lagrangians $L_1$ and $L_2$ as ``the same" when one is obtained by a Hamiltonian isotopy of the other. This motivates a definition for Floer cohomology that works even if $L_1=L_2$ or if $L_1$ and $L_2$ intersect at infinitely many points or intersect non-transversally.

\begin{defn} Let $L_1,L_2\subset M$ be compact, exact, oriented Lagrangians. Then $HF^*(L_1,L_2):=HF^*(\phi_{1}(L_1),L_2)$, for some Hamiltonian isotopy $\phi_t$ such that $\phi_{1}(L_1)$ and $L_2$ intersect transversally and at finitely many points.
\end{defn}

\begin{rem}
$HF^*(L_1,L_2)$ is invariant under Hamiltonian isotopy of Lagrangians, and hence the above definition is well-defined.
\end{rem}

\begin{thm}\label{floer is homology} If $L$ is a compact, connected, oriented Lagrangian, then $HF^*(L,L)\simeq H^*(L)$, where $H^*(L)$ is the cohomology of $L$.
\end{thm}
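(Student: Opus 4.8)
\section*{Proof proposal}

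The plan is to reduce the computation of $HF^*(L,L)$ to a Morse-theoretic model by choosing a suitable Hamiltonian perturbation, and then to identify the resulting Floer complex with the Morse complex of $L$, whose cohomology is $H^*(L)$. Since $L$ is not transverse to itself, the definition of $HF^*(L,L)$ instructs us to compute $HF^*(\phi_1(L),L)$ for a Hamiltonian isotopy $\phi_t$ making $\phi_1(L)$ transverse to $L$. By the Weinstein tubular neighborhood theorem, a neighborhood of $L$ in $M$ is symplectomorphic to a neighborhood of the zero section $L_0$ in $T^*L$, under which $L$ corresponds to $L_0$. I would then choose a Morse function $f\colon L\to\bbR$ and, for small $\epsilon>0$, take $\phi_1(L)$ to be the graph of $\epsilon\,df$ inside $T^*L$; this is a Hamiltonian perturbation of $L$ supported near $L$. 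A point of $\phi_1(L)\cap L$ is a point where $\epsilon\,df$ vanishes, i.e. a critical point of $f$, and since $f$ is Morse these intersections are transverse, finite in number, and in bijection with the critical points of $f$.

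Next I would match the two differentials. On the Floer side the coefficient $n_{xy}$ counts pseudoholomorphic strips connecting intersection points $x$ and $y$; on the Morse side the differential counts negative gradient flow lines of $f$ between critical points. The key input, \emph{Floer's theorem}, is that for $\epsilon$ small these two counts agree: the pseudoholomorphic strips with boundary on $L_0$ and on the graph of $\epsilon\,df$ are in sign-preserving bijection with the gradient trajectories of $f$. Consequently the Floer complex $CF^*(\phi_1(L),L)$ is isomorphic, as a complex, to the Morse complex of $f$. Passing to cohomology and invoking the identification of Morse cohomology with singular cohomology gives $HF^*(L,L)\cong HM^*(f)\cong H^*(L)$. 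In the $\bbZ_2$-graded setting of this section, the Floer degree of an intersection point (the parity assigned by the right-hand rule) matches the parity of the Morse index of the corresponding critical point, so the isomorphism respects the grading.

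The main obstacle is the strip--trajectory correspondence, which is the analytic heart of the argument. Establishing it requires showing that as $\epsilon\to 0$ the holomorphic strips degenerate (``adiabatically'') onto gradient flow lines, together with the transversality and gluing estimates needed to conclude that the relevant moduli spaces are isomorphic with matching orientations. I would treat this as a black box, citing Floer's original work and the references \cite{Auroux} and \cite{Seidel}, since the rest of the paper deliberately avoids the underlying analysis. The remaining steps --- the Weinstein identification, the bijection between critical points and intersection points, and the passage from Morse to singular cohomology --- are standard and comparatively routine.
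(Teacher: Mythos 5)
The paper does not prove this statement at all: it is quoted as a known result (Floer's theorem) and used as a black box, consistent with the paper's stated policy of avoiding the underlying analysis in Section 4. Your proposal, by contrast, sketches the standard proof, and the sketch is correct in outline: Weinstein's tubular neighborhood theorem identifies a neighborhood of $L$ with a neighborhood of the zero section in $T^*L$; perturbing $L$ to the graph of $\epsilon\,df$ for a Morse function $f$ is a Hamiltonian isotopy whose intersections with the zero section are exactly the critical points of $f$, transverse because $f$ is Morse; and Floer's adiabatic theorem identifies the strip counts with gradient-trajectory counts, so the Floer complex becomes the Morse complex and one concludes via $HM^*(f)\cong H^*(L)$. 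Two points are worth making explicit rather than leaving implicit in your black box: first, one needs an energy-confinement argument (for small $\epsilon$ the action differences, hence energies, of strips are $O(\epsilon)$, so by monotonicity the relevant strips cannot escape the Weinstein neighborhood) before the local Morse model computes the global Floer cohomology; second, exactness of $L$ and of the ambient manifold --- a standing hypothesis in this section of the paper, though absent from your write-up --- is what rules out disk and sphere bubbling and makes the moduli-space arguments and the Hamiltonian-isotopy invariance you invoke legitimate. With those caveats noted, your approach is the canonical one, and treating the strip-to-trajectory correspondence as a cited input puts your proof at essentially the same level of rigor as the paper, while being strictly more informative than the paper's bare citation.
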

 
Consider $L_2$ from Example \ref{zero section HF}. Since the zero section $L_2$ is Hamiltonian isotopic to $S^1$, we can conclude via the previous theorem that
$$H^0(S^1)=\bbR$$
$$H^1(S^1)=\bbR$$
which is what we expect.

\subsection{Floer Cohomology of Non-Compact Lagrangians}

\begin{defn} Let $(M,\omega)$ be an exact symplectic manifold with Liouville 1-form $\alpha$. Then, by non-degeneracy, there is a unique vector field $Z$ on $M$ such that $\iota_Z \omega=\alpha$. This vector field $Z$ is called the \textbf{Liouville vector field}.
\par We call $M$ a \textbf{Liouville manifold} if its Liouville vector field $Z$ is complete and outward pointing at infinity.
\end{defn}

\begin{defn}
Let $(M,\omega)$ be a Liouville manifold with Liouville vector field $Z$. Then $M$ can be thought of as $M=\overline{M}\cup (\partial\overline{M}\times[1,\infty))$, where $\overline{M}\subset M$ is compact such that its boundary $\partial\overline{M}$ intersects $Z$ outwardly transversally. We call $\overline{M}$ the \textbf{Liouville domain} and $\partial\overline{M}\times[1,\infty)$ the \textbf{cylindrical end}. We refer to $r\in[1,\infty)$ as the \textbf{radial coordinate} for the cylindrical end.
\end{defn}

\begin{defn}
Let $M$ be a Liouville manifold with Liouville 1-form $\alpha$. A Lagrangian $L\subset M$ is \textbf{conical at infinity} if $\alpha|_L=0$ outside of a compact set.
\end{defn}

\begin{defn} Let $M$ be a Liouville manifold. If $L_1,L_2\subset M$ are exact, oriented Lagrangians which are conical at infinity (and thus not necesarily compact), then instead of the Floer cohomology, we consider the \textbf{wrapped Floer cohomology} $HW^*(L_1,L_2):=HF^*(\text{wrap}(L_1),L_2)$, where $\text{wrap}(L_1)$ is a wrapping of $L_1$.
\par In particular, $\text{wrap}(L_1)$ is defined as $\phi_1(L_1)$ for a Hamiltonian isotopy $\phi_t$ associated to a Hamiltonian $H:M\to\bbR$ which is quadratic at infinity. In other words, $H=r^2$ outside a compact set.
\end{defn}

\begin{rem} If $L_1$ and $L_2$ are compact, then $HW^*(L_1,L_2)=HF^*(L_1,L_2)$.
\end{rem}

We will now explain the wrapping process by considering the following example.

\begin{exmp}\label{Cotangent Fibers} Let $M=T^*S^1$ with usual symplectic form $\omega=dx\wedge dy$ and let $L_1$ and $L_2$ be two cotangent fibers as shown in Figure \ref{twofibers}.
\end{exmp}

\begin{figure}[h]
\includegraphics[scale=0.1]{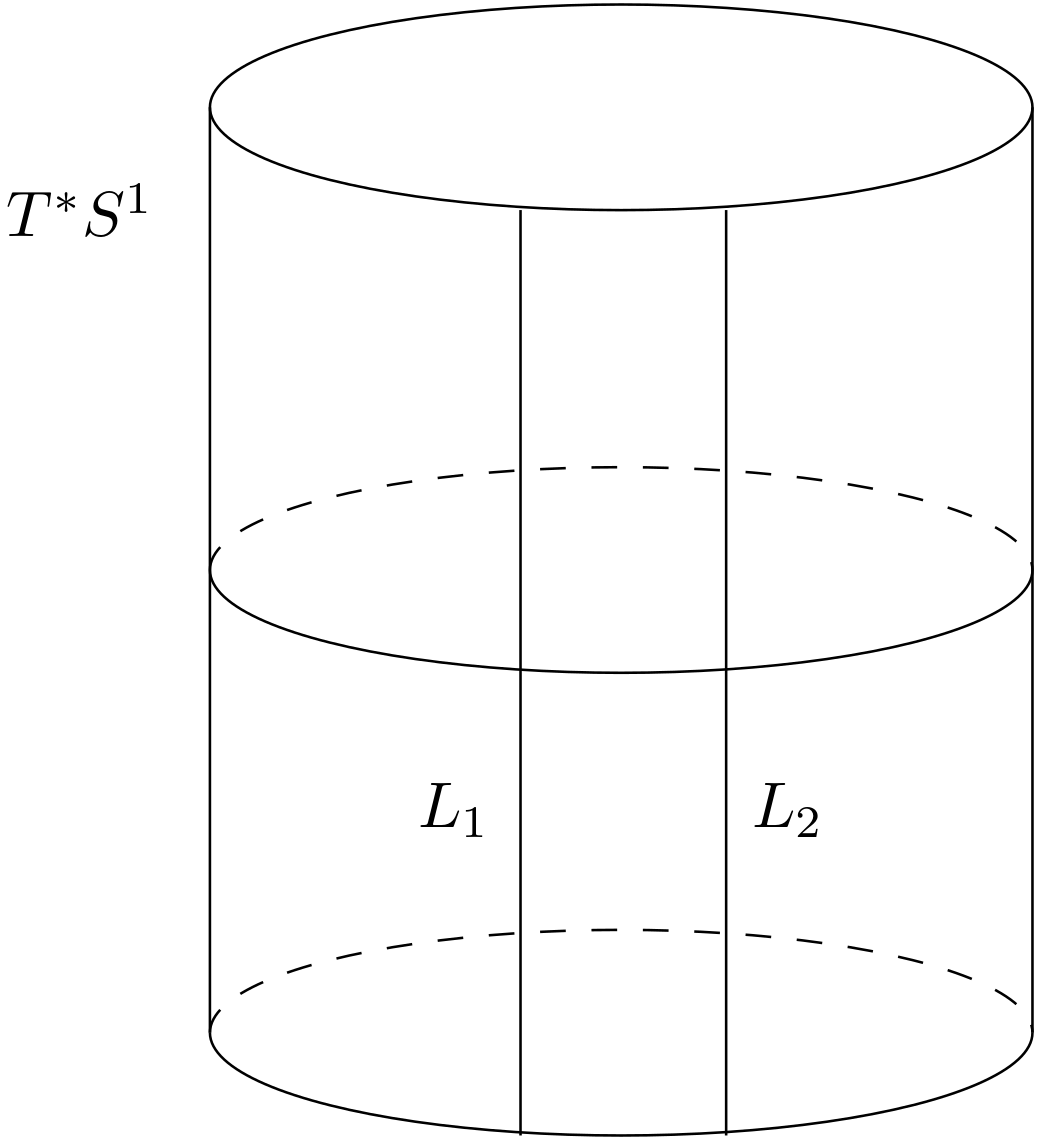}
\caption{Cotangent fibers in $T^*S^1$}
\label{twofibers}
\end{figure}

We must describe how to determine $\text{wrap}(L_1)$. We can choose the Liouville domain of $M$ to be $\overline{M}=\{(x,y)\in T^*S^1\mid\abs{y}\leq 1\}$, in which case the cylindrical end is $\partial\overline{M}\times[1,\infty)$ and $r=\abs{y}$ is the radial coordinate. Let $H=y^2$ be a Hamiltonian, noting that $H$ is quadratic at infinity, and let $X_H$ be the associated Hamiltonian vector field. Then we have
$$\omega(\cdot,X_H)=dH$$
$$dx\wedge dy(\cdot,X_H)=2y\hspace{1 mm} dy$$
which gives us $X_H=-2y\frac{\partial}{\partial x}$. Then we can define $\text{wrap}(L_1)=\phi_1(L_1)$ where $\phi_t$ is the Hamiltonian isotopy associated with our $X_H$.



\par We see that $\text{wrap}(L_1)$ and $L_2$ intersect at infinitely many points $\{x_i\mid i\in\bbZ\}$ as shown in Figure \ref{wrappedfloer}. However, we note that all these points $x_i$ have even parity, and therefore $CW^0=\bbR^\bbZ$ and $CW^1=0$. Thus we have the following chain complex
$$0\xrightarrow{0}\bbR^\bbZ\xrightarrow{0} 0$$
and the following wrapped Floer homologies
$$HW^0(L_1,L_2)=\bbR^\bbZ$$
$$HW^1(L_1,L_2)=0$$

\begin{figure}[h]
\includegraphics[scale=0.5]{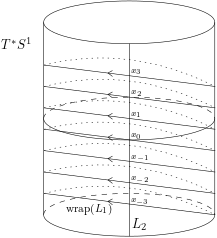}
\caption{A wrapped cotangent fiber in $T^*S^1$}
\label{wrappedfloer}
\end{figure}

\subsection{Product Structure on Floer Cochain Complex}
In the previous section, we defined a wrapped Floer cochain complex $CW^*(L_1,L_2)$ as consisting of vector spaces and differential maps between them of the form $d^i: CW^i(L_1,L_2)\to CW^{i+1}(L_1,L_2)$. These maps $d^i$ are however only one class of maps associated with wrapped Floer cochain complexes.

\begin{defn}
Let $n\in\bbN$ and let $L_1,\dots,L_{n+1}$ be Lagrangians. Then there is a class of linear maps (these maps have certain properties: see \cite{Seidel})
$$\mu^n: CW^{i_n}(L_n,L_{n+1})\otimes\dots\otimes CW^{i_1}(L_1,L_2)\to CW^{i_1+\dots+i_n+(2-n)}(L_1,L_{n+1})$$
where $\mu^n$ is determined by holomorphic disks with boundary $L_1,L_2,\dots,L_{n+1}$. We note that the differentials $d^i$ form the class of maps $\mu^1$, and that $\mu^2$ can be interpreted as composition.
\end{defn}

\begin{rem} Consider Example \ref{Cotangent Fibers}. One can work with a $\bbZ$-graded version of wrapped Floer cohomology, in which case we have $CW^0=\bbR^\bbZ$ and $CW^i=0$ for $i\neq 0$. In other words, for $n>2$, $CW^{2-n}=0$. Thus for $n>2$, the map
$$\mu^n: CW^0\otimes\dots\text{ n times }\dots\otimes CW^0\to CW^{2-n}$$
is the zero map. For $n=2$, $\mu^2$ is determined by holomorphic disks of the type shown below.
\end{rem}

\subsection{Fukaya Category}

\begin{defn}
Let $M$ be an exact symplectic manifold. Then its \textbf{Fukaya category} $\text{Fuk}(M)$ is an $A_\infty$-category consisting of compact, exact, oriented Lagrangians as objects, $CF^*(L_1,L_2)$ as morphisms, and $\mu^i$ as $A_\infty$-operations.
\end{defn}

If we want to consider non-compact Lagrangians in a Liouville manifold $M$, then we instead consider the \textbf{wrapped Fukaya category} $\text{WFuk}(M)$ with exact oriented Lagrangians which are conical at infinity as objects, $CW^*(L_1,L_2)$ as morphisms, and $\mu^i$ as $A_\infty$-operations. See \cite{Seidel} and \cite{Ganatra}.

\begin{rem} $\text{Fuk}(M)\subset\text{WFuk}(M)$ as a full $A_\infty$-subcategory.
\end{rem}

W can also consider the Fukaya category under homological perturbation (see \cite{Seidel}), which replaces $\mu^1,\mu^2\dots$ with $(\mu')^1,(\mu')^2,\dots$ where $(\mu')^i=0$ for $i> 2$. This perturbation does not affect	 $HF^*$ or $HW^*$.

\begin{defn} The $A_\infty$-categories with $\mu_3,\mu_4\dots=0$ are called \textbf{differential  graded categories}, or \textbf{dg categories}. Explicitly, a category $C$ is a dg category if its morphism spaces $\text{hom}^*(A,B)$ are ($\bbR$-linear, $\bbZ$-graded) cochain complexes with differential map $d:\text{hom}^n(A,B)\to\text{hom}^{n+1}(A,B)$ and ($\bbR$-linear) composition maps $\circ: \text{hom}^m(B,E)\otimes \text{hom}^n(A,B)\to \text{hom}^{n+m}(A,E)$ for any objects $A,B,E$ and $n,m\in\bbZ$ which satisfy the following
\begin{enumerate}
\item[1)] The degree of $\text{id}_A$ is 0 and $d(\text{id}_A)=0$ , where $\text{id}_A: A\to A$ is the identity morphism of $A$.
\item[2)] The (graded) Leibniz rule: $d(gf)=(dg)f + (-1)^{|g|}g(df)$ where $|g|$ is the degree of $g$.
\end{enumerate}
\end{defn}

\begin{defn} Let $C$ be a dg category with objects $A,B$. Then we can define $\text{Hom}^*(A,B)$ as the cohomology of the cochain complex $\text{hom}^*(A,B)$.
\end{defn}

\begin{defn} A functor $F:C\to D$ between dg categories that respects the dg structure is called a dg functor. More specifically, we want $F(dx)=d(F(x))$ for any morphism $x$ in $C$.
\end{defn}

\section{Representations of DG Categories}

\begin{defn}
We refer to the dg category whose objects are unbounded (resp. bounded) cochain complexes of $\bbR$-vector spaces as $\text{Mod }\bbR$ (resp. $\text{Mod}^b\bbR$).
\end{defn}

\begin{prop}\label{ModR}
$\text{Mod }\bbR$ is equivalent to its full dg category with graded vector spaces, i.e. cochain complexes with zero differential map, as objects.
\end{prop}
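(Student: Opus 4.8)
The plan is to realize the claimed equivalence through the inclusion dg functor $\iota$ sending the full dg subcategory $\mathcal{D}$ of zero-differential cochain complexes (i.e.\ graded vector spaces) into $\mathcal{C} := \text{Mod }\bbR$, and to show that $\iota$ is a quasi-equivalence. Since $\mathcal{D}$ is by definition a \emph{full} dg subcategory, $\iota$ induces the identity on the morphism complexes between objects of $\mathcal{D}$, so it is automatically quasi-fully-faithful. The entire content therefore reduces to quasi-essential surjectivity: every object $V^\bullet$ of $\mathcal{C}$ must be isomorphic, in the homotopy category $H^0(\mathcal{C})$, to some graded vector space --- equivalently, every cochain complex of $\bbR$-vector spaces must be chain homotopy equivalent to a complex with zero differential.

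First I would establish the key structural decomposition, which is where the hypothesis that we work over the field $\bbR$ is essential. For a cochain complex $(V^\bullet, d)$, set $B^n = \text{Image}(d^{n-1})$ and $Z^n = \text{Ker}(d^n)$. Because every subspace of an $\bbR$-vector space is a direct summand, I can choose, in each degree $n$, a subspace $H^n$ with $Z^n = B^n \oplus H^n$ and a subspace $C^n$ with $V^n = Z^n \oplus C^n$. The differential then vanishes on $Z^n = B^n \oplus H^n$ and restricts to an isomorphism $d^n|_{C^n} : C^n \xrightarrow{\sim} B^{n+1}$. This exhibits a direct sum decomposition of cochain complexes $V^\bullet \cong H^\bullet \oplus A^\bullet$, where $H^\bullet$ carries the zero differential (and is canonically identified with the cohomology $H^\bullet(V)$), while $A^\bullet = \bigoplus_n A^\bullet_{(n)}$ is a direct sum of two-term acyclic complexes $A^\bullet_{(n)} : \dots \to 0 \to C^n \xrightarrow{\sim} B^{n+1} \to 0 \to \dots$.

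Next I would observe that each acyclic summand $A^\bullet_{(n)}$ is contractible: the map $h = (d^n|_{C^n})^{-1} : B^{n+1} \to C^n$, extended by zero on the remaining summands, is a chain homotopy satisfying $dh + hd = \text{id}$ on $A^\bullet_{(n)}$. Taking the direct sum of these contractions gives a contraction of $A^\bullet$, so the projection $V^\bullet \to H^\bullet$ and the inclusion $H^\bullet \hookrightarrow V^\bullet$ are mutually inverse chain homotopy equivalences. Thus $V^\bullet$ is isomorphic in $H^0(\mathcal{C})$ to the graded vector space $H^\bullet$, which establishes quasi-essential surjectivity and hence shows that $\iota$ is a quasi-equivalence.

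I expect the main obstacle to be bookkeeping rather than conceptual: one must verify that the chosen degreewise splittings assemble into honest maps of cochain complexes (so that the projection and inclusion are genuine chain maps, not merely graded-linear ones) and check the homotopy identity $dh + hd = \text{id}$ on each acyclic block. The unbounded nature of the objects in $\text{Mod }\bbR$ introduces no real difficulty, since all splittings and the contracting homotopy are constructed one degree at a time and then assembled as direct sums.
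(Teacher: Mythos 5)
Your proof is correct. Note that the paper states Proposition \ref{ModR} without any proof at all (it is simply asserted and then used as the working model of $\text{Mod }\bbR$), so there is no argument of the paper to compare yours against; your argument --- quasi-full-faithfulness is automatic because the subcategory is full, and quasi-essential surjectivity follows since over the field $\bbR$ every cochain complex splits as its cohomology (with zero differential) plus a direct sum of contractible two-term complexes $C^n \xrightarrow{\sim} B^{n+1}$ --- is the standard proof of this fact and correctly supplies what the paper omits.
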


From now on, we will work with the model of $\text{Mod }\bbR$ or $\text{Mod}^b\bbR$ as given in Proposition \ref{ModR}.

\begin{defn}\label{ModC}
Let $C$ be a dg category. Then $\text{Mod }C$ is a dg category whose objects are $A_\infty$-functors from $C^{op}$ to $\text{Mod }\bbR$.
We can similarly define the dg category $\text{Mod}^b C$ using $\text{Mod}^b\bbR$ in place of $\text{Mod }\bbR$.
\end{defn}

\begin{prop}
If $C$ is semifree, i.e. compositions are free and differentials are filtered (see \cite{KarabasLee}), then we can replace $A_\infty$-functors with dg functors in Definition \ref{ModC}.
\end{prop}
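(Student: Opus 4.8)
The plan is to prove this by \emph{strictifying} $A_\infty$-functors: I would exhibit a dg-equivalence between the dg category of $A_\infty$-functors $C^{op}\to\text{Mod }\bbR$ and the dg category of dg functors $C^{op}\to\text{Mod }\bbR$, so that the substitution in Definition \ref{ModC} leaves $\text{Mod }C$ unchanged up to dg-equivalence. First I would unwind the definitions in the model of Proposition \ref{ModR}: since we may take the objects of $\text{Mod }\bbR$ to be graded vector spaces, i.e. cochain complexes with zero differential, the target becomes a dg category whose only nonvanishing $A_\infty$-operation is the composition $\mu^2$, with $\mu^1=0$ and $\mu^{\geq 3}=0$. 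In this model an $A_\infty$-functor $F$ is a sequence of multilinear maps $F^1,F^2,\dots$, while a dg functor is exactly such an $F$ with $F^{\geq 2}=0$ whose linear term $F^1$ commutes with the differential and respects composition.

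Next I would record precisely what semifreeness provides. That compositions in $C$ are free means the underlying graded category of $C^{op}$ is freely generated by a set of generating morphisms $\{g_\alpha\}$, so that a strict functor out of $C^{op}$ is determined by, and may be prescribed freely on, the generators alone, with no composition relations to verify. That the differential is filtered means there is an exhaustive filtration of the generators under which $dg_\alpha$ lies in the subcategory generated by strictly lower generators; this is what makes any construction indexed by the generators amenable to induction on the filtration level.

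The heart of the argument is then an obstruction-theoretic strictification carried out by induction on the filtration of the generators. Given an $A_\infty$-functor $F$, I would construct a dg functor $\Phi$ together with an $A_\infty$-homotopy $T$ exhibiting $\Phi$ and $F$ as homotopic, hence isomorphic, objects of $\text{Mod }C$. On filtration level zero the differential vanishes and the quadratic $A_\infty$-functor relation already forces $F^1$ to be a chain map there, so I set $\Phi^1=F^1$ on those generators and $T=0$. At each higher level I would read the $A_\infty$-functor equations as equations expressing the failure of the linear term to be strictly closed and multiplicative in terms of the higher components $F^{\geq 2}$ and the differential $d=\mu^1_C$; freeness of compositions lets me prescribe $T$ on the new generators so as to solve these equations, and the filtered differential guarantees that the right-hand sides involve only data already defined at strictly lower levels, making the recursion well-founded. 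In this way the components $F^{\geq 2}$ are absorbed into $T$, and $\Phi$ emerges as a dg functor homotopic to $F$.

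Finally I would promote this object-level statement to a dg-equivalence by running the same inductive scheme on $A_\infty$-natural transformations, showing that the morphism complexes between $A_\infty$-functors and those between the corresponding dg functors are quasi-isomorphic. The main obstacle I anticipate is exactly the inductive step: verifying that the obstruction to strictness at each filtration level is a coboundary that $T$ can cancel, which is where the filtered-differential hypothesis is indispensable and where the signs and degree shifts in the $A_\infty$-functor equations must be tracked with care; if the filtration is infinite one must additionally check that the inductively built $T$ and $\Phi$ converge, which again follows from exhaustiveness. Where this bookkeeping becomes heavy I would defer to the corresponding result in \cite{KarabasLee}.
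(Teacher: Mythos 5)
First, a point of calibration: the paper does not actually prove this proposition --- it is quoted from \cite{KarabasLee} without argument --- so there is no in-paper proof to compare yours against. Your first step, strictifying a given $A_\infty$-functor by induction on the filtration of the generators (freeness of compositions lets you prescribe the strict functor and the homotopy on generators only; filteredness of the differential makes the recursion well-founded), is the right idea for the object-level statement, and it is the standard route to such strictification results.

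The genuine gap is in your final step, where you claim that the morphism complexes ``between the corresponding dg functors'' --- that is, the complexes of strict graded natural transformations in the dg category of dg functors --- are quasi-isomorphic to the $A_\infty$-natural transformation complexes. That claim is false even for semifree $C$, and the paper itself contains the counterexample. Take $C$ as in Example \ref{sphere category}: one object, morphisms freely generated by $z$ with $\abs{z}=-1$ and $dz=0$; this $C$ is semifree. Consider the strict dg functor $V_{X_0}=(\bbR,0)$ of Example \ref{zero section in S^2}. Its strict graded natural endotransformations form just $\bbR$ in degree $0$: a degree-$n$ linear map $\bbR\to\bbR$ between spaces concentrated in degree $0$ exists only for $n=0$, the naturality condition against $z$ is vacuous because $z$ acts by $0$, and the differential vanishes. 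So the strict endomorphism cohomology is $\bbR$, concentrated in degree $0$. But the correct morphism complex --- the one given by Theorem \ref{mapsbetreps}, which computes the $A_\infty$-natural transformations --- is $\bbR\oplus\bbR[-2]$, with cohomology $H^*(S^2)$, as it must be, since $V_{X_0}$ represents the zero section and $HF^*(X_0,X_0)=H^*(S^2)$. Hence the quasi-isomorphism of hom complexes that your plan requires fails: strict natural transformations compute an underived hom, while $\text{Mod }C$ needs the derived one. The proposition therefore has to be read as a statement about \emph{objects}: for semifree $C$, every $A_\infty$-functor $C^{op}\to\text{Mod }\bbR$ is equivalent to a strict dg functor (your step one), while the morphism complexes of $\text{Mod }C$ must remain the $A_\infty$-natural transformations --- which, for semifree $C$, admit the small Koszul-type model of Theorem \ref{mapsbetreps} built from the generators. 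Your inductive scheme is salvageable, but it must be aimed at showing that strictification is fully faithful with respect to these derived complexes, not at comparing them with strict natural transformations.
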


\begin{defn}\label{Yoneda} If $C$ is a dg category, then we can represent it linear algebraically via the \textbf{Yoneda embedding}
\begin{align*} y\colon & C\hookrightarrow \text{Mod }C \\
& A\mapsto \text{hom}^{*}( \rule{0.3 cm}{0.1 mm} , A)
\end{align*}
\end{defn}

In the above definition, $\text{Mod }C$ consists of graded vector spaces with linear maps between them. We can use the Yoneda embedding to better understand the specific case of the cotangent bundle. 

\begin{thm}\label{Fukaya Representation}\cite{Abouzaid} Let $M$ be a manifold and $L$ be a cotangent fiber of $T^*M$. Create a dg category $C$ with object $L$, morphism $CW^*(L,L)$, and operations $\mu^1,\mu^2$ (after possibly homological perturbation). Then,
$$\text{Fuk}(T^*M)\subseteq \text{Mod}^b C$$
as a full dg subcategory where $\text{Mod}^b C$ consists of finite dimensional graded vector spaces and linear maps between them.
\end{thm}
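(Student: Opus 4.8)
The plan is to deduce the statement from three ingredients: the Yoneda embedding of Definition \ref{Yoneda}, Abouzaid's generation theorem, and a transversality count showing that the relevant morphism spaces are finite-dimensional. The strategy is to realize each compact exact Lagrangian $S$ as the $C$-module obtained by recording all of its wrapped morphisms against the single cotangent fiber $L$, namely $CW^*(L,S)$, and to check that this assignment is a cohomologically full and faithful dg functor landing in $\text{Mod}^b C$.

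First I would apply the Yoneda embedding to the wrapped Fukaya category, $y\colon \text{WFuk}(T^*M)\hookrightarrow \text{Mod }\text{WFuk}(T^*M)$, sending a Lagrangian $S$ to the module $\text{hom}^*(\cdot,S)=CW^*(\cdot,S)$. By the ($A_\infty$) Yoneda lemma this functor is cohomologically full and faithful, so $\text{WFuk}(T^*M)$, and in particular its full subcategory $\text{Fuk}(T^*M)$ of compact Lagrangians, embeds as a full dg subcategory of its module category. The issue is that the ambient category $\text{Mod }\text{WFuk}(T^*M)$ records morphisms against \emph{every} Lagrangian, whereas we want to remember only those against the fiber $L$.

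The key step is to reduce from $\text{Mod }\text{WFuk}(T^*M)$ to $\text{Mod }C$ by restricting a module along the one-object inclusion $C\hookrightarrow\text{WFuk}(T^*M)$; concretely, this sends $y(S)$ to the $C$-module $CW^*(L,S)$. This is where Abouzaid's theorem \cite{Abouzaid} is essential: the cotangent fiber $L$ generates $\text{WFuk}(T^*M)$, so no morphism data is lost by testing only against $L$. Granting generation, the composite $S\mapsto CW^*(L,S)$ remains cohomologically full and faithful, because morphisms between objects built from a generator are faithfully recorded by the complex of morphisms out of that generator. This exhibits $\text{Fuk}(T^*M)$ as a full dg subcategory of $\text{Mod }C$.

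Finally I would check that the image actually lies in the bounded subcategory $\text{Mod}^b C$ of finite-dimensional graded vector spaces. Since $S$ is compact it is confined to a bounded region of $T^*M$, so the properly embedded, noncompact wrapped fiber $\text{wrap}(L)$ meets $S$ in a closed subset of the compact manifold $S$; after a transverse perturbation this intersection is finite, whence $CW^*(L,S)$ is a finite-dimensional graded $\bbR$-vector space. Thus every object of $\text{Fuk}(T^*M)$ maps to an object of $\text{Mod}^b C$, as required. The main obstacle is the generation statement itself, which is the deep content of \cite{Abouzaid} and which I would cite rather than reprove: showing that a single fiber generates the wrapped Fukaya category rests on the full apparatus of open-string invariants and the relation to based loop spaces, well beyond the scope of this review, while the Yoneda fullness and the finite intersection count are formal once generation is in hand.
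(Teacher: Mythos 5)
This theorem is not proved in the paper at all: it is stated as a citation of \cite{Abouzaid}, so there is no internal proof to compare against. Your proposal is therefore best read as filling in the scaffolding around that citation, and as such it is essentially the standard (and correct) argument: the restricted Yoneda functor $S\mapsto CW^*(L,S)$ (the composite of the embedding of Definition \ref{Yoneda} with restriction along $C\hookrightarrow\text{WFuk}(T^*M)$) is cohomologically fully faithful on everything generated by $L$, Abouzaid's generation theorem says every object of $\text{Fuk}(T^*M)\subset\text{WFuk}(T^*M)$ is so generated, and compactness of $S$ forces $CW^*(L,S)$ to be a bounded, finite-dimensional graded vector space, since all intersections of $\text{wrap}(L)$ with $S$ lie in the compact set $S$ and are isolated after a transverse perturbation. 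Two points deserve emphasis if you write this up. First, the step ``generation implies the restricted Yoneda functor is fully faithful'' is a genuine lemma about $A_\infty$-modules (it holds for objects obtained from $L$ by iterated cones and shifts, which is the form of generation Abouzaid proves); it is standard but should be attributed, e.g.\ to \cite{Seidel}. Second, the finite-dimensionality claim should be phrased at the level of cohomology (or after the homological perturbation implicit in the statement, via Proposition \ref{ModR}), since that is the model of $\text{Mod}^b C$ the paper uses. With those caveats, your reduction is sound, and you are right that the irreducible core --- that a single fiber generates the wrapped Fukaya category --- is precisely the content of \cite{Abouzaid} and is appropriately cited rather than reproved.
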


Note that $\text{Fuk}(T^* M)$ contains all (compact, connected, exact) Lagrangians of $M$, not just the cotangent fibre $L$. In this way Theorem \ref{Fukaya Representation} allows us to ``generate" the Lagrangians of the cotangent bundle with just one cotangent fiber.

\begin{exmp}\label{sphere category}
Let $M=S^2$. If we let $L=T_x^* S^2$, we can describe its representation. We create a dg-category $C$ with object $L$, morphism $CW^*(L,L)$, and operations $\mu^1, \mu^2$. More specifically, \cite{Pinwheel} describes how $CW^*(L,L)$ is $\bbR[z]$, where $\abs{z}=-1$ and $dz=0$. Moreover, the product structure $\mu^2$ is free, meaning $z^i\otimes z^j=z^{i+j}$. Hence the category can be expressed by the following quiver diagram:

\[\begin{tikzcd}[scale=2]
	\bullet\ar[loop right,"z"]
\end{tikzcd}\]

which means the morphisms of $C$ are generated by $z$.

\par Then, the elements of $\text{Mod}^b C$ are of the following form.

\[\begin{tikzcd}[scale=2]
	V^*\ar[loop right,"f^*"]
\end{tikzcd}\]

where $V^*$ is a bounded graded vector space and $f^*$ is a graded linear map of degree $-1$. By Theorem \ref{Fukaya Representation}, such elements can be used to represent Lagrangians in $\text{Fuk}(T^*S^2)$.

\end{exmp}

\begin{rem} Maps $f$ between graded vector spaces always have differential $df=0$, as is the case in the previous example.
\end{rem}

The following theorem will help us work with maps between representations.

\begin{thm}\label{mapsbetreps}[\cite{Seidel},\cite{karabas-3}]
Let $C$ be a dg-category with the quiver representation

\[\begin{tikzcd}[scale=2]
	\bullet\ar[loop right,"x_i"]
\end{tikzcd}\]
with the arrows $x_i$ for $i=1,\dots,n$ (possibly invertible up to homotopy), and let
\begin{center}
$\overline{V}=\left(\begin{tikzcd}[scale=2]
	V^* \ar[loop right,"f^*_i"]
\end{tikzcd}\right)$

$\overline{W}=\left(\begin{tikzcd}[scale=2]
	W^* \ar[loop right,"g^*_i"]
\end{tikzcd}\right)$
\end{center}

be two objects in $\text{Mod }^b C$. Then the morphism complex $\text{hom}^*(\overline{V},\overline{W})$ is given by
$$\text{hom}^*(\overline{V},\overline{W})=\text{hom}^*(V^*,W^*)\oplus\bigoplus_{i=1}^n \text{hom}^*(V^*,W^*)[\abs{x_i}-1]$$
\par If the differentials of the morphisms in the category $dx_i=0$ for all $1=1,\dots,n$, then the differential map is given as follows. Define
$$U^*:=\text{hom}^*(V^*,W^*)$$
$$s_i^*:=g_i^* t_0^*-(-1)^{\abs{t_0^*}\abs{x_i}}t_0^* f_i^*$$
Then the differential map is given by

\begin{align*} U^*\oplus\bigoplus_{i=1}^n U^*[\abs{x_i}-1]\to & U^*\oplus\bigoplus_{i=1}^n U^*[\abs{x_i}-1]\\
(t_0^*,t_1^*,\dots,t_n^*)\mapsto & (0,s_1^*,\dots,s_n^*)\\
\end{align*}

\end{thm}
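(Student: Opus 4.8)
The plan is to realize each object $\overline{V}=(V^*,\{f_i^*\})$ of $\text{Mod}^b C$ as a dg representation of the one-vertex quiver with loops $x_1,\dots,x_n$, and to compute the morphism complex by resolving $\overline{V}$ by free $C$-modules. Since the compositions in $C$ are free, $C$ is the tensor (path) dg algebra on the space $E=\text{span}_\bbR(x_1,\dots,x_n)$, and such algebras are hereditary: every module admits a length-one free resolution. Concretely, I would write down the standard resolution
$$0\to C\otimes E\otimes V^*\xrightarrow{\delta} C\otimes V^*\xrightarrow{\epsilon} V^*\to 0,$$
where $\epsilon$ is the module action and $\delta$ is determined on generators by $c\otimes x_i\otimes v\mapsto c x_i\otimes v - c\otimes f_i^*(v)$, up to a Koszul sign fixed below. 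The first step is to check that this really is an exact sequence of dg $C$-modules; exactness is the standard computation for free algebras, and compatibility with the differential is precisely where a term involving $dx_i$ would enter $\delta$.

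Next I would apply $\text{hom}_C^*(-,\overline{W})$ to the resolution. Because $C\otimes V^*$ and $C\otimes E\otimes V^*$ are free $C$-modules, the free–forgetful adjunction collapses their morphism complexes into $\overline{W}$ to
$$\text{hom}_C^*(C\otimes V^*,\overline{W})\cong\text{hom}^*(V^*,W^*)=U^*$$
and
$$\text{hom}_C^*(C\otimes E\otimes V^*,\overline{W})\cong\text{hom}^*(E\otimes V^*,W^*)\cong\bigoplus_{i=1}^n U^*,$$
with the $i$-th summand shifted by $[\abs{x_i}-1]$: the internal degree $\abs{x_i}$ comes from the factor $E$, while the homological $-1$ records the position of $\delta$ in the resolution. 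This produces exactly the claimed underlying graded vector space $U^*\oplus\bigoplus_{i=1}^n U^*[\abs{x_i}-1]$.

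It then remains to identify the induced differential as $(t_0^*,t_1^*,\dots,t_n^*)\mapsto(0,s_1^*,\dots,s_n^*)$. The first component vanishes because the resolution has length one, so no differential lands in the $U^*$-summand. The component into the $i$-th copy of $U^*[\abs{x_i}-1]$ is the map dual to $\delta$: writing a free-module morphism $\phi$ in terms of its restriction $t_0^*$ via $\phi(c\otimes v)=\rho_{W^*}(c)\,t_0^*(v)$, the term $c x_i\otimes v$ of $\delta$ yields $g_i^* t_0^*$ (the action $g_i^*$ on $W^*$), while the term $c\otimes f_i^*(v)$ yields $t_0^* f_i^*$, with the Koszul sign $(-1)^{\abs{t_0^*}\abs{x_i}}$ arising from commuting $t_0^*$ past $x_i$. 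This gives exactly $s_i^*=g_i^* t_0^*-(-1)^{\abs{t_0^*}\abs{x_i}}t_0^* f_i^*$. The hypothesis $dx_i=0$ is what guarantees that $\delta$ carries no extra term and that $U^*$ has no internal differential, so that the total differential reduces to the $s_i^*$.

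The main obstacle is bookkeeping rather than conceptual: verifying exactness of the standard resolution in the graded/dg setting and, above all, pinning down every Koszul sign so that the differential emerges as $g_i^* t_0^*-(-1)^{\abs{t_0^*}\abs{x_i}}t_0^* f_i^*$ and not with a competing sign convention. I would fix all conventions once at the outset (the degree of $x_i$, the shift functor $[\,\cdot\,]$, and the Koszul sign for composing graded maps) and then propagate them consistently through $\delta$ and its dual.
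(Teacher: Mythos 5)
You should first note that the paper does not actually prove Theorem \ref{mapsbetreps}: it quotes the result from \cite{Seidel} and \cite{karabas-3} and uses it as a black box, so your proposal can only be measured against the standard argument behind those citations. In those terms, your core computation is the right one and is essentially the standard proof of the free case: when $C$ is freely generated by loops $x_1,\dots,x_n$ with $dx_i=0$, its endomorphism algebra is the graded tensor algebra on $E=\mathrm{span}_{\bbR}(x_1,\dots,x_n)$, the length-one resolution $0\to C\otimes E\otimes V^*\to C\otimes V^*\to V^*\to 0$ is the classical hereditary-algebra resolution, and applying $\mathrm{hom}_C(-,\overline{W})$ together with the free--forgetful adjunction yields exactly the graded space $U^*\oplus\bigoplus_{i=1}^n U^*[\abs{x_i}-1]$ with differential $(t_0^*,t_1^*,\dots,t_n^*)\mapsto(0,s_1^*,\dots,s_n^*)$ and the stated Koszul sign in $s_i^*$.

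Two genuine gaps remain, however. First, what you compute is a naive dg-hom complex out of a free resolution, i.e.\ a model for the derived hom $\mathrm{RHom}_C(\overline{V},\overline{W})$; but the morphism complex of $\text{Mod}^b\,C$ in this paper is defined through $A_\infty$-functors (Definition \ref{ModC}). You still need the bridge: that quasi-isomorphic (semifree/cofibrant) replacements do not change the $A_\infty$-morphism complex up to quasi-isomorphism, so that your two-term complex really computes $\text{hom}^*(\overline{V},\overline{W})$. The conclusion is then an identification in the derived sense rather than a literal equality of complexes, which is all the paper's applications (cohomology groups and Euler characteristics of bounded complexes) require, but the step must be invoked explicitly. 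Second, your tensor-algebra model does not cover the stated generality: the theorem allows generators that are invertible up to homotopy, and its first clause (the graded decomposition) is asserted without the hypothesis $dx_i=0$; both features are used in the torus application, where $dh=mn-nm\neq 0$ and $m,n$ are homotopy-invertible. For $dx_i\neq 0$ your map $\delta$ must acquire correction terms and you should check that the underlying graded decomposition survives; for homotopy-invertible generators you need either to work in the Karabas--Lee semifree formalism directly or to argue that (derived) localization at the $x_i$ does not change morphism complexes between modules on which the $x_i$ already act invertibly. Neither issue is fatal, but a complete proof has to address both.
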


\begin{exmp}\label{zero section in S^2} We can give the specific representation for $X_0$, the zero-section of $T^* S^2$, but we must first consider the following fact. If $K$ is a compact Lagrangian, then the graded vector space $V^*$ in its representation is given by $V^*=HW^*(L,K)$, where $L$ is the cotangent fiber in the dg-category $C$. Then, we know that $X_0$ intersects $L$ at one point, so $HW^*(L,X_0)=\bbR$. Then, the zero section $X_0$, oriented appropriately, has the following representation in $\text{Mod}^b C$:

\begin{center}
$V_{X_0}=$(\begin{tikzcd}[scale=2]
	\bbR\ar[loop right,"0"]
\end{tikzcd})
\end{center}

Note that the map is zero necessarily since it is a degree $-1$ map.
\par We also recall that $X_0$ is a sphere, so by Theorem \ref{floer is homology},
$$HF^i(X_0,X_0)=H^i(X_0)=\begin{cases} \bbR & i=0 \\ 0 & i=1 \\ \bbR & i=2 \\ 0 & \text{otherwise}\end{cases}$$

We can recover this cohomology from the representation $V_{X_0}$ by considering $CF^*(X_0,X_0)=\text{hom}^*(V_{X_0},V_{X_0})$, i.e., maps between the representation and itself. By Theorem \ref{mapsbetreps}, we get
$$\text{hom}^*(V_{X_0},V_{X_0})=\text{hom}^*(\bbR,\bbR)\oplus\text{hom}^*(\bbR,\bbR)[-2]=\bbR\oplus\bbR[-2]$$

Thus the relevant chain complex is
$$CF^*(X_0,X_0)=\text{hom}^*(V_{X_0},V_{X_0})=0\rightarrow\bbR[0]\rightarrow 0\rightarrow\bbR[-2]\rightarrow 0$$
where all the maps are necesarilly the zero map. We can then recover the desired homology
$$HF^i(X_0,X_0)=\text{Hom}^i(V_{X_0},V_{X_0})=\begin{cases} \bbR & i=0 \\ 0 & i=1 \\ \bbR & i=2 \\ 0 & \text{otherwise}\end{cases}$$
\end{exmp}

\section{Lagrangians in $T^*S^2$ and $T^*T^2$}
We can use the techniques developed in this paper to characterize the orientable, compact, connected, exact, embedded Lagrangians in the cotangent bundles of the sphere and the torus.

\subsection{$T^*S^2$}
We make the following claim: any orientable, compact, connected, embedded, exact Langrangian surface in $T^*S^2$ is topologically $S^2$. We first let $L$ be a cotangent fiber and let $C$ be the category associated with $T^*S^2$ that we introduced in Example \ref{sphere category}. Then we need the following lemma.

\begin{lem}\label{concentrated} Let $S$ be a surface represented by $V_S\in\text{Mod}^b C$, i.e,
\begin{center}
$V_{S}=\left(\begin{tikzcd}[scale=2]
	V^*\ar[loop right,"f^*"]
\end{tikzcd}\right)$
\end{center}
where $V^*$ is a bounded graded vector space and $f^*$ is a degree $-1$ graded linear map. If $V^*$ is not concentrated at a single degree, that is, if $V^n, V^{n+k}\neq 0$ for some $n,k\in\bbZ$ with $k\geq 1$, then there exist $m,l\in\bbZ$ with $l\geq 4$ such that both
$$\text{Hom}^m(V_S,V_S)\neq 0$$
$$\text{Hom}^{m+l}(V_S,V_S)\neq 0$$
\end{lem}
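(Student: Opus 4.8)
The plan is to apply Theorem \ref{mapsbetreps} to the single-object, single-loop category $C$ of Example \ref{sphere category}, whose one arrow is $z$ with $|z| = -1$ and $dz = 0$, and then simply read off the two extreme nonzero cohomology groups of the resulting complex. Writing $U^* := \text{hom}^*(V^*, V^*)$, the theorem gives
$$\text{hom}^*(V_S, V_S) = U^* \oplus U^*[\,|z| - 1\,] = U^* \oplus U^*[-2],$$
and, since $df^* = 0$, its differential sends $(t_0^*, t_1^*)$ to $(0, s_1^*)$ with $s_1^* = f^* t_0^* - (-1)^{|t_0^*|} t_0^* f^*$, a graded commutator with $f^*$ that lands in the shifted summand. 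The cohomology $\text{Hom}^*(V_S,V_S)$ is the cohomology of this complex.

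First I would fix the degree bookkeeping. Since $V^*$ is bounded, let $p$ and $q$ be its smallest and largest nonzero degrees; the non-concentration hypothesis supplies two nonzero degrees differing by $k \geq 1$, so $q - p \geq 1$. Then $U^*$ is supported in degrees $p - q$ through $q - p$, and both extremes are nonzero: a nonzero linear map $V^q \to V^p$ witnesses $U^{p-q} \neq 0$, and a nonzero map $V^p \to V^q$ witnesses $U^{q-p} \neq 0$. Writing $T^i = U^i \oplus U^{i-2}$ for the total complex, its lowest nonzero degree is $p - q$, where $T^{p-q} = U^{p-q}$, and its highest is $q - p + 2$, where $T^{q-p+2} = U^{q-p}$.

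The core step is to show the cohomology survives in exactly these two extreme degrees. In degree $p - q$ the incoming differential is zero since $T^{p-q-1} = 0$, and the outgoing differential vanishes because the commutator $s_1^*$ of a degree $p-q$ map with $f^*$ lands in $U^{p-q-1} = 0$; hence $\text{Hom}^{p-q}(V_S, V_S) = U^{p-q} \neq 0$. Symmetrically, in degree $q - p + 2$ every element is a cocycle (as $T^{q-p+3} = 0$), and the incoming differential from $T^{q-p+1}$ vanishes because that group sits entirely in the shifted summand, so there $t_0^* = 0$ and thus $s_1^* = 0$; hence $\text{Hom}^{q-p+2}(V_S, V_S) = U^{q-p} \neq 0$. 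Taking $m = p - q$ and $l = (q - p + 2) - (p - q) = 2(q - p) + 2$ then yields $l \geq 4$, as required.

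I do not expect a serious obstacle: the commutator differential vanishes automatically at both extreme degrees for purely degree-counting reasons, so no sign analysis is needed. The only points demanding care are the shift convention in $U^*[-2]$ and the verification that the two surviving groups really sit at distance $2(q-p)+2$, which is precisely where the bound $l \geq 4$ comes from, as opposed to the $l = 2$ that is forced in the concentrated case of Example \ref{zero section in S^2}.
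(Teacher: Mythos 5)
Your proof is correct and is essentially the same argument as the paper's: both apply Theorem \ref{mapsbetreps} to split $\text{hom}^*(V_S,V_S)$ as $U^*\oplus U^*[-2]$, locate the extreme nonzero degrees $p-q$ and $q-p+2$ (the paper calls these $-k$ and $k+2$), show the differentials vanish there for degree reasons, and take $l=2(q-p)+2\geq 4$. Your write-up is in fact slightly more explicit than the paper's, since you spell out why the commutator differential vanishes at both extremes rather than just citing the theorem.
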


\begin{proof} Assume $V^*$ is bounded and not concentrated at a single degree. Then there exist $n,k\in\bbZ$ with $k\geq 1$ such that $V^n,V^{n+k}\neq 0$ and $V^i=0$ for $i<n$ or $i>n+k$.Then $\hom^*(V^*, V^*)$ is not concentrated at a single degree. Define $W^i:=\text{hom}^i(V^*,V^*)$. Then we can write
$$\hom^*(V^*,V^*)=\dots0\rightarrow 0\rightarrow W^{-k}\rightarrow\dots\rightarrow W^k\rightarrow0\rightarrow0\dots$$
where:
$$W^{-k}=\text{hom}^{-k}(V^*,V^*)=\bigoplus_{i\in\bbZ}\hom^*(V^i,V^{i-k})=\hom^*(V^{n+k},V^n)\neq 0$$
We similarly have that $W^k\neq 0$, and by assumption we have $W^i=0$ for $i<-k$ and $i>k$. We then have by Theorem \ref{mapsbetreps} that
$$\hom^*(V_S,V_S)=\hom^*(V^*,V^*) \oplus \hom^*(V^*, V^*)[-2]$$
and so we have in particular that 
$$\hom^{-k}(V_S,V_S)=W^{-k}\neq 0\text{ and }\hom^{k+2}(V_S,V_S)=W^k\neq 0$$
and also
$$\hom^i(V_S,V_S)=0\text{ if } i<-k\text{ or } i>k+2$$
\par Then, because the relevant differential maps
$$d^{-k}:\hom^{-k}(V_S,V_S)\to\hom^{-k+1}(V_S,V_S)$$
$$d^{k+1}:\hom^{k+1}(V_S,V_S)\to\hom^{k+2}(V_S,V_S)$$
are zero by Theorem \ref{mapsbetreps}, we observe that
$$\text{Hom}^{-k}(V_S,V_S),\text{Hom}^{k+2}(V_S,V_S)\neq 0$$
Letting $l:=2k+2$ and $m:=-k$, we get $l\geq 4$ since $k\geq 1$, and so we have our result.
\end{proof}

We can now prove our claim from the beginning of this section.
\begin{thm}\label{sphere} Let $S$ be an orientable, compact, connected, exact Lagrangian surface in $T^* S^2$. Then $S$ is topologically a sphere $S^2$.
\end{thm}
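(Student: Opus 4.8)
The plan is to combine the classification of closed orientable surfaces with the representation-theoretic machinery of Section 5, using Lemma \ref{concentrated} as the central device. Since $S$ is an orientable, compact, connected surface, the classification of surfaces tells us that $S\cong\Sigma_g$ for some $g\geq 0$; the goal thus reduces to showing $g=0$. By Theorem \ref{Fukaya Representation}, $S$ is represented by some object $V_S\in\text{Mod}^b C$ with underlying bounded graded vector space $V^*$ and degree $-1$ self-map $f^*$, and the endomorphism cohomology of this representation computes Floer cohomology, i.e. $\text{Hom}^*(V_S,V_S)=HF^*(S,S)$.

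First I would use Theorem \ref{floer is homology} to identify $HF^*(S,S)\simeq H^*(S)=H^*(\Sigma_g)$. The decisive observation is that $H^*(\Sigma_g)$ is supported in only three consecutive degrees (degrees $0,1,2$), so any two nonzero cohomology groups have indices differing by at most $2$. I would contrast this with Lemma \ref{concentrated}: if $V^*$ were not concentrated at a single degree, the lemma would produce indices $m$ and $m+l$ with $l\geq 4$ at which $\text{Hom}^*(V_S,V_S)$ is nonzero, forcing two nonzero Floer groups separated by a gap of at least $4$. Since $4>2$, this is impossible, and so $V^*$ must be concentrated at a single degree.

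Once $V^*$ is concentrated at a single degree $p$, the map $f^*$ is forced to vanish, being a degree $-1$ endomorphism of a space living in one degree. Applying Theorem \ref{mapsbetreps} then gives $\text{hom}^*(V_S,V_S)=\text{hom}^*(V^*,V^*)\oplus\text{hom}^*(V^*,V^*)[-2]$ with zero differential, where $\text{hom}^*(V^*,V^*)$ is itself concentrated in degree $0$ with dimension $(\dim V^*)^2$. Hence $HF^*(S,S)$ is $(\dim V^*)^2$-dimensional in degrees $0$ and $2$ and vanishes elsewhere; in particular $HF^1(S,S)=0$. Comparing with $H^*(\Sigma_g)$, connectedness forces $H^0=\bbR$, so $(\dim V^*)^2=1$; and $HF^1(S,S)=0$ forces $H^1(\Sigma_g)=\bbR^{2g}=0$, whence $g=0$ and $S\cong S^2$.

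The main obstacle is really packaged into Lemma \ref{concentrated}, so the crux of the theorem itself is the numerical tension between the gap $l\geq 4$ it supplies and the width-$2$ support of surface cohomology; everything downstream is routine linear algebra through Theorem \ref{mapsbetreps}. The one point requiring care is the normalization of the grading in Theorem \ref{floer is homology}: I would appeal to Example \ref{zero section in S^2}, where the zero section's Floer cohomology sits precisely in degrees $0$ and $2$, to confirm that $HF^*(S,S)$ matches $H^*(S)$ without an unexpected shift, so that the support really is three consecutive degrees and the comparison of Betti numbers is legitimate.
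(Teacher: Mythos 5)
Your proposal is correct and follows essentially the same route as the paper's proof: both use Lemma \ref{concentrated} (via the tension between its gap $l\geq 4$ and the width-$2$ support of $H^*(S)$ guaranteed by Theorem \ref{floer is homology} and Proposition \ref{0 homology}) to force $V^*$ into a single degree, then compute $\text{hom}^*(V_S,V_S)$ via Theorem \ref{mapsbetreps} and use connectedness to pin down the cohomology. The only cosmetic difference is that you invoke the classification of surfaces at the outset and kill $g$ via $H^1=0$, whereas the paper identifies $H^*(S)$ with $H^*(S^2)$ and invokes the classification at the end; your added care about the grading normalization in Theorem \ref{floer is homology} is a sensible extra check, not a divergence.
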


\begin{proof} Let $V_S$ represent $S$. We know that $\dim S=2$ since it is a Lagrangian of the four-dimensional cotangent bundle. Thus, by Proposition \ref{0 homology}, we know $\text{Hom}^i(V_S,V_S)=0$ for all $i\neq 0,1,2$. Then, by Lemma \ref{concentrated}, $V^*$ must be concentrated at a single degree, i.e, $V^*=\bbR^m[s]$ for some grading $s$ and for some $m\in\bbZ$. Then, by Theorem \ref{mapsbetreps}
$$\text{hom}^*(V_S,V_S)=\bbR^{m^2}\oplus\bbR^{m^2}[-2]$$
so
$$CF^*(S,S)= 0\xrightarrow{0}\bbR^{m^2}\xrightarrow{0} 0\xrightarrow{0} \bbR^{m^2}[-2]\xrightarrow{0} 0$$
and we have the cohomology
$$HF^i(S,S)=H^i(S)=\begin{cases}  \bbR^{m^2} & i=0 \\ 0 & i=1 \\ \bbR^{m^2} & i=2 \\ 0 & \text{otherwise} \end{cases}$$
Since $S$ is connected, $H^0(S)=\bbR$ and $H^i(S)=0$ for $i<0$. Hence $m=1$. As a result, $H^*(S)$ is the cohomology of the sphere $S^2$. Since surfaces are uniquely classified by their (co)homologies, the only orientable, compact, connected, exact, embedded Lagrangians in $T^*S^2$ are spheres.
\end{proof}

\begin{rem} In fact, we can represent $S$ as
\begin{center}
$V_{S}=\left(\begin{tikzcd}[scale=2]
	\bbR[s]\ar[loop right,"0"]
\end{tikzcd}\right)
=
\left(\begin{tikzcd}[scale=2]
	\bbR\ar[loop right,"0"]
\end{tikzcd}\right)[s]$
\end{center}
since $f^*=0$ necessarily because it is of degree $-1$.
\par We can note that this is the representation of the zero section $X_0[n]$, and thus our claim is consistent with the Nearby Lagrangian Conjecture, which states that every closed exact Lagrangian in the cotangent bundle of a closed manifold is Hamiltonian isotopic to the zero section.
\end{rem}

\subsection{$T^*T^2$}
We now make the following claim about the cotangent bundle of the torus $T^2$.
\begin{thm}\label{torus}
Let $S$ be an orientable, compact, connected, embedded, exact Langrangian surface in $T^*T^2$. Then $S$ is topologically a torus $T^2$.
\end{thm}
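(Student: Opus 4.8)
The plan is to mirror the structure of the proof of Theorem~\ref{sphere}, but the computation of the Fukaya-category representation for $T^*T^2$ is substantially more involved, and this is where the real work lies. First I would identify the relevant dg-category $C$ associated to $T^*T^2$. As in Example~\ref{sphere category}, I would take $L$ to be a cotangent fiber $T_x^*T^2$ and compute the wrapped Floer cohomology $CW^*(L,L)$. For the torus this algebra is richer than the $\bbR[z]$ of the sphere: the based loop space homology of $T^2$ gives a group ring $\bbR[\bbZ^2]$, so I expect $CW^*(L,L)\cong \bbR[z_1^{\pm},z_2^{\pm}]$, a Laurent polynomial ring in two \emph{invertible} generators $z_1,z_2$ of degree $0$ with $dz_i=0$. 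This corresponds to a quiver with a single vertex carrying two loops, both invertible up to homotopy, which is exactly the setting anticipated by Theorem~\ref{mapsbetreps} with $n=2$ and $\abs{x_i}=0$.

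Next I would describe the objects of $\text{Mod}^b C$ representing a candidate Lagrangian $S$. Such an object is a bounded graded vector space $V^*$ equipped with two graded linear endomorphisms $f_1^*,f_2^*$ of degree $0$, each invertible up to homotopy (reflecting the invertibility of $z_1,z_2$). I would then apply Theorem~\ref{mapsbetreps} to compute
$$\text{hom}^*(V_S,V_S)=\text{hom}^*(V^*,V^*)\oplus\text{hom}^*(V^*,V^*)[-1]\oplus\text{hom}^*(V^*,V^*)[-1],$$
using $\abs{x_i}-1=-1$ for both loops. The differential is assembled from the maps $s_i^*=f_i^*t_0^*-t_0^*f_i^*$ (the graded commutators, since $\abs{x_i}=0$). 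The key constraint, exactly as in the sphere case, is that $S$ is a Lagrangian surface in a four-dimensional cotangent bundle, so by Proposition~\ref{0 homology} we must have $\text{Hom}^i(V_S,V_S)=0$ for $i\neq 0,1,2$. I would prove a concentration lemma analogous to Lemma~\ref{concentrated}: if $V^*$ is not concentrated in a single degree, the spread of $\text{hom}^*(V^*,V^*)$ together with the two degree-shifted copies would force a nonzero $\text{Hom}$ group outside $\{0,1,2\}$, a contradiction. Hence $V^*=\bbR^m[s]$ is concentrated in one degree, whereupon $f_1^*,f_2^*$ become genuine commuting endomorphisms of $\bbR^m$ and the commutator differentials vanish.

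With $V^*=\bbR^m[s]$ concentrated in degree $s$, the morphism complex collapses to $CF^*(S,S)=\bbR^{m^2}\oplus\bbR^{m^2}[-1]\oplus\bbR^{m^2}[-1]\oplus(\text{a degree-}2\text{ piece})$ with zero differentials, and by Theorem~\ref{floer is homology} this equals $H^*(S)$. The Betti numbers read off as $\dim H^0=\dim H^2=m^2$ and $\dim H^1=2m^2$, which is precisely $m^2$ times the cohomology of the torus $T^2$. Connectedness forces $H^0(S)=\bbR$, so $m=1$, and then $H^*(S)$ is exactly the cohomology of $T^2$; by the classification of orientable surfaces by their homology, $S$ is topologically a torus. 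I expect the main obstacle to be the Floer-theoretic input in the first step, namely correctly establishing that $CW^*(T_x^*T^2,T_x^*T^2)\cong\bbR[z_1^{\pm},z_2^{\pm}]$ with the degrees and invertibility as claimed, and confirming that Theorem~\ref{mapsbetreps} applies verbatim to the two-loop invertible quiver; the subsequent homological algebra and the concentration argument are then direct generalizations of the sphere case.
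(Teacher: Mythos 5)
Your argument has a genuine gap at its foundation: the chain-level model you propose for $C=CW^*(L,L)$ is wrong. It is true that the cohomology algebra $HW^*(L,L)$ is the Laurent ring $\bbR[z_1^{\pm},z_2^{\pm}]$ concentrated in degree $0$, but the quiver formalism of Theorem \ref{mapsbetreps} treats the loops as \emph{free} generators, so a single vertex with two invertible degree-$0$ loops models the group algebra of the free group $F_2$, not of $\bbZ^2$. The correct semifree model, computed in \cite{KarabasLee} and used in the paper, has a \emph{third} loop $h$ of degree $-1$ with $dh=mn-nm$, which imposes commutativity of the two degree-$0$ loops up to homotopy. This third generator is not a technicality: it contributes the summand $\text{hom}^*(V^*,V^*)[-2]$ to $\text{hom}^*(V_S,V_S)$, which is precisely the source of $H^2$. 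In your two-loop model, once $V^*$ is concentrated in a single degree, the complex $\text{hom}^*(V^*,V^*)\oplus\text{hom}^*(V^*,V^*)[-1]\oplus\text{hom}^*(V^*,V^*)[-1]$ has nothing in degree $2$, so your own computation would force $H^2(S)=0$ --- contradicting compactness and orientability of $S$ --- and the ``degree-$2$ piece'' you insert parenthetically has no origin in your formula.

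A second, independent problem: even with the correct model, your claim that concentration of $V^*$ makes ``the commutator differentials vanish'' is false. In the sphere case the loop has degree $-1$, so concentration forces $f^*=0$ and hence zero differentials; here the loops have degree $0$, they survive concentration, and the differential $t_0\mapsto(0,[\alpha,t_0],[\beta,t_0],\dots)$ is generally nonzero. Thus $\text{Hom}^0(V_S,V_S)$ is the centralizer of $\{\alpha,\beta\}$ in $M_m(\bbR)$, not $\bbR^{m^2}$, and deducing $m=1$ from $H^0(S)=\bbR$ requires an extra Schur-type centralizer argument that you do not supply. The paper's actual proof sidesteps every one of these issues: using the three-loop quiver it computes only the Euler characteristic, $\chi(\text{hom}^*(V_K,V_K))=(1-1-1+1)\,\chi(\text{hom}^*(V^*,V^*))=0$, an invariant that is insensitive to the differentials (which matters, since $dh\neq 0$ means the differential formula of Theorem \ref{mapsbetreps} does not apply verbatim anyway), and then concludes directly from the classification of compact orientable surfaces by Euler characteristic --- no concentration lemma, no Betti numbers, and no rank argument are needed.
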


Unlike the proof for $T^*S^2$, this proof does not require calculating (co)homologies, but rather only the Euler characteristic.
\begin{proof} If $L$ is a cotangent fiber of $T^*T^2$, then $\text{Fuk}(T^*T^2)\simeq\text{Mod}^b C$ where $C=CW^*(L,L)$. By \cite{KarabasLee}, the quiver representation of $C$ of given by

\[\begin{tikzcd}[scale=2]
	\bullet\ar[loop right,"\textit{m,n,h}"]
\end{tikzcd}\]

where $\abs{m}=\abs{n}=0$, $\abs{h}=-1$, $dm=dn=0$, $dh=mn-nm$, and $m,n$ are invertible up to homotopy.

\par Then the representations in $\text{Mod}^b C$ takes the following form

\[\begin{tikzcd} V^*\ar[loop right,"\alpha\text{, }\beta\text{, }\gamma"]
\end{tikzcd}\]
where $\abs{\alpha}=\abs{\beta}=0$, $\abs{\gamma}=-1$, $d\gamma=\alpha\beta-\beta\alpha=0$, and $\alpha,\beta$ are invertible up to homotopy (all differentials of $V^*$ are zero, so ``up to homotopy'' can be dropped).
\par Let $K$ be a compact, embedded, connected, orientable, exact Lagrangian, and let $V_K$ be its representation in $\text{Mod}^b C$. Then by Theorem \ref{mapsbetreps}
\begin{center} $\text{hom}^*(V_K,V_K)=\text{hom}^*(V^*,V^*)\oplus\text{hom}^*(V^*,V^*)[|m|-1]\oplus\text{hom}^*(V^*,V^*)[|n|-1]\oplus\text{hom}^*(V^*,V^*)[|h|-1]$

$=\text{hom}^*(V^*,V^*)\oplus\text{hom}^*(V^*,V^*)[-1]\oplus\text{hom}^*(V^*,V^*)[-1]\oplus\text{hom}^*(V^*,V^*)[-2]$
\end{center}

Then, by Proposition \ref{euler props}
\begin{center}$\chi(\text{hom}^*(V_K,V_K))=\chi(\text{hom}^*(V^*,V^*))+\chi(\text{hom}^*(V^*,V^*)[-1])+\chi(\text{hom}^*(V^*,V^*)[-1])+\chi(\text{hom}^*(V^*,V^*)[-2])$\end{center}

\begin{center}$
=\chi(\text{hom}^*(V^*,V^*))+(-1)^{-1}\chi(\text{hom}^*(V^*,V^*))+(-1)^{-1}\chi(\text{hom}^*(V^*,V^*))+(-1)^{-2}\chi(\text{hom}^*(V^*,V^*))$\end{center}
$$=(1-1-1+1)\chi(\text{hom}^*(V^*,V^*))=0$$
Then, we can also write
$$0=\chi(\text{hom}^*(V_K,V_K))=\chi(\text{hom}^*(K,K))=\chi(\text{Hom}^*(K,K))=\chi(H^*(K))$$
and since we previously noted that compact, orientable surfaces are uniquely identified by their Euler characteristics
$$\chi(H^*(K))=0\implies K=\text{Torus}$$
as desired. 
\end{proof}

\begin{rem}
One can in principle prove that for any genus $g$ surface $S$, any orientable, compact, connected, embedded, exact Lagrangian in $T^*S$ is topologically $S$ via methods similar to the ones we used in the $S=S^2$ case. However, one first needs to determine the quiver for $C$ in the equivalence $\text{Fuk}(T^*S)=\text{Mod}^b(C)$. This can be done by applying a similar computation given in \cite{KarabasLee} of the $S=T^2$ case.
\end{rem}

\bibliographystyle{plain}
\bibliography{SymplecticGeometryReferences}

\end{document}